\newcommand\mymat[4]{
{\left(
\begin{smallmatrix}#1&#2\\#3&#4\end{smallmatrix}
\right)}}
\def\Sp{\operatorname{Sp}}
\def\SL{\operatorname{SL}}
\def\Grit{\operatorname{Grit}}
\def\Borch{\operatorname{Borch}}
\def\Hum{\operatorname{Hum}}
\def\IM{\operatorname{Im}}
\def\LA{{\mathcal L\/}}
\def\tr{\operatorname{tr}}
\def\Z{{\mathbb Z}}
\def\C{{\mathbb C}}
\def\N{{\mathbb N}}
\def\Q{{\mathbb Q}}
\def\Pj{{\mathbb P}}
\def\cal{\mathcal}
\let\smtwomat\mymat
\theoremstyle{plain}
\newtheorem{thm}{Theorem}[section]
\newtheorem{lm}[thm]{Lemma}
\newtheorem{cor}[thm]{Corollary}
\newtheorem{df}[thm]{Definition}
\theoremstyle{definition}
\def\cusp{{\text{\rm cusp}}}
\def\ord{{\text{\rm ord}}}
\def\TB{\mathop{\text{\rm TB}}}
\def\BTB{\mathop{\text{\rm BTB}}}
\def\Half{{\cal H}}
\def\<{\langle}
\def\>{\rangle}
\def\wh{^\text{{\rm w.h.}}}
\def\weak{^\text{{\rm weak}}}
\def\mero{^\text{{\rm mero}}}
\def\inv{^{-1}}
\newcommand\ldL{\llcorner}
\newcommand\rdL{\lrcorner}
\def\Coeff{\operatorname{Coeff}}
\def\Fam#1{{\cal F}_#1}
\def\AA{{\mathcal A}}
\def\End{\operatorname{End}}
\def\new{{\text{\rm new}}}
\begin{document}
\title[Antisymmetric Forms]
{Antisymmetric Paramodular Forms of Weights 2 and 3}

\author[V. Gritsenko]{Valery Gritsenko}
\address{Laboratoire Paul Painlev\'e, Universit\'e Lille 1,
59655 Villeneuve d'Ascq Cedex France and  IUF; {\it current address:}
National Research University Higher School of Economics, Moscow}
\email{Valery.Gritsenko@math.univ-lille1.fr}

\author[C. Poor]{Cris Poor}
\address{Dept{.} of Mathematics, Fordham University, Bronx, NY 10458, USA}
\email{poor@fordham.edu}

\author[D. Yuen]{David S. Yuen}
\address{Department of Mathematics and Computer Science, Lake Forest College, 555 N. Sheridan Rd., Lake Forest, IL 60045, USA}
\email{yuen@lakeforest.edu}

\date{\today}

\begin{abstract}
We define an algebraic set in $23$~dimensional projective space whose $\Q$-rational points 
correspond to meromorphic, antisymmetric, paramodular Borcherds products.  We know two lines inside this 
algebraic set. Some rational points on these lines give holomorphic Borcherds products and thus 
construct examples of Siegel modular forms on degree two paramodular groups.  
Weight~$3$ examples provide antisymmetric canonical differential forms on Siegel modular 
threefolds.  Weight~$2$ is the minimal weight and these examples, via the Paramodular Conjecture, give evidence for  the modularity 
of some  rank one abelian surfaces defined over~$\Q$.  
\end{abstract}

\subjclass[2010]{11F46; 11F50}
\keywords{Borcherds Product,  paramodular}  

\maketitle

\section{Introduction}

This article formulates the construction of meromorphic Borcherds products as a diophantine problem. 
We discover two infinite families of solutions and produce interesting examples of 
holomorphic antisymmetric paramodular forms of weights~$2$ and~$3$.  
The first author constructed paramodular cusp forms of low weight and used them to study the 
geometry of the moduli spaces of abelian surfaces and Kummer surfaces \cite{GritArith}, \cite{Grit2}, \cite{Grit3}.  
These paramodular forms were constructed by lifting and so were symmetric.  
In \cite{Grit3}, with geometric applications in mind, the first author posed the problem 
of constructing antisymmetric paramodular cusp forms of low weight.

Further motivation to construct paramodular cusp forms of low weight 
comes from questions of modularity.  
Certain 
cusp forms of weight~$2$ and paramodular level~$N$ conjecturally 
correspond to abelian surfaces of conductor~$N$ defined over~$\Q$ 
whose endomorphisms defined over~$\Q$ are trivial, 
as made precise in the Paramodular Conjecture 
of Brumer and Kramer \cite{BK}.  
Also, nonlift 
paramodular cusp forms of weight~$3$  conjecturally occur as cohomology 
classes in $H^5\left( \Gamma_0(N); \C \right)$,  as 
studied by Ash, Gunnells and McConnell \cite{AGM}.  

In this article, we use Borcherds products to construct examples of 
antisymmetric paramodular cusp forms of weights~$2$ and~$3$ 
with applications to both geometry and modularity.  Weight two is the minimal weight, and 
weight three is the canonical weight.  Our method relies 
heavily on the {\sl theta blocks\/} introduced by Gritsenko, Skoruppa and 
Zagier in \cite{GSZ}.  We were led to these constructions by the Paramodular Conjecture and so 
we devote some space to it here.  
\smallskip

\noindent
{\bf Paramodular Conjecture\/}  
{\rm (A. Brumer, K. Kramer, 2010.)\/}  \newline
{\it Let $N \in \N$.  
There is a bijection between:  \newline 
$(1)$  isogeny classes of abelian surfaces~$\AA$ defined over~$\Q$ 
with conductor~$N$ and $\End_{\Q}(\AA)=\Z$, and \newline
$(2)$  lines of Hecke eigenforms $f \in S_2\left( K(N) \right)^\new$ 
that are not Gritsenko lifts from $J_{2,N}^\cusp$ and 
whose eigenvalues are rational.  

In this correspondence, 
$ L(\AA,s, \text{\rm Hasse-Weil})= L(f,s, \text{\rm spin})$. }
\smallskip

In \cite{PoorYuenPara}, intensive computations proved that, for primes~$p<600$, 
the space $S_2\left( K(p) \right)$ is spanned by 
Gritsenko lifts with the possible exceptions of 
$p \in \{ 277, 349, 353, 389, 461, 523, 587 \}$.   
These computations are consistent with the Paramodular Conjecture in two ways.  
First, Brumer and Kramer proved, among other results \cite{BK},  
that for primes~$p<600$ no abelian surfaces over~$\Q$ with conductor~$p$ 
can exist apart from the $p$ on this list.  
Second, an abelian surface defined over~$\Q$ with conductor~$p$ is known 
for every prime on this list.  In fact, for $p=587$ two isogeny classes are known, 
one $\AA_{587}^{+}$ with even rank zero and one $\AA_{587}^{-}$ with 
odd rank one.  Still, on the automorphic side, there remains the problem of giving a construction to 
prove the existence of weight two paramodular cusp forms that are not Gritsenko lifts.  
Only in one case, $p=277$, was the existence of a  weight two paramodular 
nonlift eigenform proven.  
This article  constructs a  weight two nonlift paramodular eigenform in $S_2\left( K(587) \right)^{-}$ and thereby 
produces additional evidence for the Paramodular Conjecture.  This 
construction is our motivating example, see section~4.  

We also construct a nonlift paramodular eigenform in $S_3\left( K(167) \right)^{+}$, 
which experimentally seems to be the first weight three form all of whose paramodular 
Atkin-Lehner signs are~$+1$.  This form induces a canonical differential form on a moduli 
space of polarized Kummer surfaces, $K(167)^{+}\backslash\Half_2$, thereby proving that it is not 
rational or unirational.  The final section gives a complete list of the  examples we found 
by solving diophantine equations.   

{\Small The first author  was partially supported by the Russian Academic Excellence Project `5-100' (research in arithmetic) and by RSF grant, project 14-21-00053 (geometric applications).}

\section{Background}

We follow \cite{freitag83} for the theory of Sigel modular forms.  
For a group $\Gamma$ 
commensurable with $\Gamma_n = \Sp_n(\Z)$, let 
$M_{k}(\Gamma, \chi)$ denote the $\C$-vector space of Siegel modular forms 
of weight~$k$ and character~$\chi$, and $S_{k}(\Gamma, \chi)$ the subspace of cusp forms.  The basic 
factor of automorphy is defined by $\mu(\mymat{A}BCD, \Omega)= \det(C \Omega+D)$.  
The space of meromorphic functions transforming by $\mu^k \chi$ for $\Gamma$ is 
denoted by $M_{k}^{ \text{mero} }(\Gamma, \chi)$.   
We will be concerned with degree  $n=2$ and the {\it paramodular\/} group 
of paramodular  level~$N$:  
$$
K(N)= 
\begin{pmatrix} 
*  &  N*  &  *  &  *  \\
*  &  *  &  *  &  */N  \\
*  &  N*  &  *  &  *  \\
N*  &  N*  &  N*  &  *  
\end{pmatrix} 
\cap \Sp_2(\Q), 
\quad \text{ $ * \in \Z$. }  
$$

For any natural number $N>1$, the paramodular group $K(N)$ has a normalizing involution   
$\mu_N$ given by 
$\mu_N = \mymat{F_N^{*}}{0}{0}{F_N}$, where 
$F_N =\frac{1}{ \sqrt{N} } \mymat{0}{1}{-N}{0}$ is the Fricke involution, 
and we also use the group 
$K(N)^{+}$ generated by $ K(N)$ and $\mu_N $.  
We have the direct sum 
$M_k( K(N) ) = M_k(K(N))^{+} \oplus M_k( K(N))^{-}$ 
into plus and minus $\mu_N$-eigenspaces.  
This involution and the Witt map can be used to prove that 
$M_2\left( K(p) \right) = S_2\left( K(p) \right)$ when $p$ is prime, see 
\cite{GPY}.  
We can also consider the group $K(N)^{*}$, generated by $K(N)$ and all the 
paramodular Atkin-Lehner involutions, see \cite{GritHulek0}; 
like $\mu_N$, these have an elliptic Atkin-Lehner involution in the lower right 
and the transpose inverse in the upper left.  
The moduli space of Kummer 
surfaces with polarization type $(1,N)$ is isomorphic to $K(N)^{*} \backslash \Half_2$.  One 
can use the paramodular Atkin-Lehner operators, the Witt map, and the cusp structure of $K(N)$ to 
show that $M_2\left( K(N) \right) = S_2\left( K(N) \right)$ for squarefree~$N$.  
\smallskip

We define Jacobi forms with reference to the subgroup
$$
P_{2,1}(\Z)= 
\begin{pmatrix} 
*  &  0  &  *  &  *  \\
*  &  *  &  *  &  *  \\
*  &  0  &  *  &  *  \\
0  &  0  &  0  &  *  
\end{pmatrix} 
\cap \Sp_2(\Z), 
\quad \text{ $ * \in \Z$.}  
$$
The group $P_{2,1}(\Z)$ is generated by a Heisenberg group~$H(\Z)$, 
a copy of~$\SL_2(\Z)$, and by~$\{I_4,-I_4\}$. The general element~$h$ 
of~$H(\Z)$ and $\sigma$ of~$\SL_2(\Z)$ have the form
$$
h = 
\begin{pmatrix} 
1  &  0  &  0  &  v  \\
\lambda  &  1  &  v  &  \kappa  \\
0  &  0  &  1  &  -\lambda  \\
0  &  0  &  0  &   1
\end{pmatrix};
\quad 
\sigma= 
\begin{pmatrix} 
a  &  0  &  b  &  0  \\
0  &  1  &  0  &  0  \\
c  &  0  &  d  &  0  \\
0  &  0  &  0  &   1
\end{pmatrix}, 
$$
for $ \lambda, v, \kappa \in \Z$,  and for $\mymat{a}bcd \in \SL_2(\Z)$.   
A  character $v_H: H(\Z) \to \{ \pm 1 \}$ is 
given by $v_H(h) = (-1)^{\lambda v + \lambda + v + \kappa}$ 
and extends to a character on  $P_{2,1}(\Z)$ trivial on $\SL_2(\Z)$. 
Similarly, the multiplier~$\epsilon$ of the Dedekind Eta function extends to a 
multiplier on   $P_{2,1}(\Z)$ trivial on $H(\Z)$.  
  
The following definition of Jacobi forms,  see \cite{GritNiku98PartII},  
is equivalent to the usual one  \cite{EZ}. 
For $a,b, 2k, 2m \in \Z$, 
consider holomorphic $\phi: \Half_1 \times \C \to \C$, 
such that the modified function ${\tilde \phi}: \Half_2 \to \C$,  
given by ${\tilde \phi}\mymat{\tau}{z}{z}{\omega} = \phi(\tau, z) e( m \omega)$, 
transforms by the factor of automorphy $\mu^k \epsilon^a v_H^b$ for $P_{2,1}(\Z)$.  
We always select holomorphic branches of roots that are positive on the purely imaginary elements of the Siegel half space.  
We necessarily have $2k \equiv a \mod 2$, $2m \equiv b \mod 2$,   and $m \ge 0$ for nontrivial~$\phi$.  
Such $\phi$ have Fourier expansions 
$ \phi(\tau, z) = \sum_{ n,r \in \Q } c(n,r; \phi) q^n \zeta^r$, 
for $q=e(\tau)$ and $\zeta= e(z)$.  
We write $\phi \in J_{k,m}\wh( \epsilon^a v_H^b)$ if, 
additionally, the support of $\phi$ has $n$ bounded from below, 
and call such forms {\it weakly holomorphic.\/}   We write 
 $\phi \in J_{k,m}\weak( \epsilon^a v_H^b)$ if the support of 
 $\phi$ satisfies $n \ge 0$; 
 $\phi \in J_{k,m}( \epsilon^a v_H^b)$ if $4mn-r^2 \ge 0$; 
 $\phi \in J_{k,m}^\cusp( \epsilon^a v_H^b)$ if $4mn-r^2 > 0$.  
We also use the notation 
$ J_{k,m}^\cusp( \nu)=\{ \phi \in  J_{k,m}^\cusp: \ord_q\phi \ge \nu \} $ 
and denote the elements of $ J_{k,m}\wh$ whose Fourier coefficients are integral 
by  $J_{k,m}\wh( \Z)$.  

 For basic examples of Jacobi forms we make use of  
the Dedekind Eta function 
$
\eta(\tau)=q^{\frac{1}{24}} 
\prod_{n \in \N} (1-q^n) 
$
and the odd Jacobi theta function:  
\begin{align*}
\vartheta(\tau, z) 
&=q^{\frac18}\left( \zeta^{\frac12}- \zeta^{-\frac12} \right) 
\prod_{j \in \N}(1 - q^j\zeta)(1-q^j\zeta^{-1})
(1-q^j).
\end{align*}
We have $\vartheta \in J_{\frac12,\frac12}^\cusp( \epsilon^3 v_H)$, 
$ \eta \in J_{\frac12,0}^\cusp( \epsilon)$ and 
$\vartheta_{\ell} \in J_{\frac12,\frac12 \ell^2}^\cusp( \epsilon^3 v_H^\ell)$, 
where $\vartheta_{\ell}(\tau,z) = \vartheta(\tau, \ell z)$ and $\ell \in \N$, 
compare \cite{GritNiku98PartII}.  
\smallskip

The Fourier-Jacobi expansion of a paramodular form is an important connection 
between Jacobi and paramodular forms.  
For $\Omega=\mymat{\tau}zz{\omega} \in \Half_2$, and $ f \in M_k\left( K(N) \right)^{\epsilon}$, 
write the Fourier-Jacobi expansion of $f$ as 
$$
f(\Omega) = \sum_{m=0}^{\infty} 
\phi_{Nm}(\tau,z) e(Nm\omega).
$$
Each Fourier-Jacobi coefficient is a Jacobi form, 
$\phi_{Nm} \in J_{k,Nm}$, and these Fourier coefficients satisfy \cite{IPY} the 
{\it involution condition:\/}
\begin{equation}
\label{invcond}
c(n,r;\phi_{Nm}) = (-1)^k \epsilon \, c(m,r;\phi_{Nn}).
\end{equation}
Paramodular forms with $ (-1)^k \epsilon=+1$ are called {\it symmetric\/}, those 
with  $ (-1)^k \epsilon=-1$ are called {\it antisymmetric.\/}

We let 
$\epsilon^a \times v_H^b:K(N)^{+} \to e(\frac{1}{12}\Z)$ 
denote the unique character, if it exists, 
whose restriction to $P_{2,1}(\Z)$ is $\epsilon^a  v_H^b$, 
and whose value on $\mu_N$ is~$1$.  
For $a,b \in \Z$, the character  exists 
precisely when there is a $j \in \Z$ such that 
$a \equiv j \frac{24}{\gcd(2N,12)} \mod 24$ and $b \equiv j \frac{2N}{\gcd(2N,12)} \mod 2,  $ 
see \cite{GritHulek}.

\smallskip

Let $ \phi \in J_{k,t}\wh$ be a weakly holomorphic Jacobi form.  
Recall the level raising Hecke operators 
$V_{m}: J_{k,t}\wh \to J_{k,tm}\wh$ from \cite{EZ}, page 41.  
These operators 
have the following action on Fourier coefficients: 
$$
c\left( n,r;\phi | V_m \right)= 
\sum_{ d \in \N: \, d | (n,r,m) } 
d^{k-1} c\left( \frac{nm}{d^2},\, \frac{r}{d}; \phi \right).  
$$
Given any $\phi \in J_{k,t}\wh$, 
we may consider the following series  
$$
\Grit(\phi)\mymat{\tau}zz{\omega} = 
\delta(k)c(0,0;\phi)G_k(\tau)+
\sum_{m \in \N} 
\left( \phi | V_m \right)(\tau,z) e( mt \omega) 
$$
where  $\delta(k)=1$ for even  $k\ge 4$ and  $\delta(k)=0$ for all other $k$,  and  
$G_k(\tau)=\frac12  \zeta(1-k)+
\Sigma_{n\ge 1}\sigma_{k-1}(n)e(\tau)$
is the Eisenstein series of weight~$k$.  
\begin{thm} 
{\rm  (\cite{GritArith}, \cite{Grit2})\/} 
For $\phi \in J_{k,t}$, the series $\Grit(\phi)$ converges on $\Half_2$ and defines a holomorphic 
function $\Grit(\phi): \Half_2 \to \C$ 
that 
is an element of $M_k\left( K(t)\right)^{\epsilon}$ with $\epsilon=(-1)^k$.
This is a cusp form if  $\phi\in J^{\rm cusp}_{k,t}$.  
\end{thm}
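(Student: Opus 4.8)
The plan is to prove this classical result (the Gritsenko lift, or "arithmetic lifting") by verifying directly that the series $\Grit(\phi)$ transforms correctly under a set of generators of $K(t)$, then addressing convergence and holomorphy, and finally the cusp condition. The strategy exploits the fact that $K(t)$ is generated by the parabolic subgroup $P_{2,1}$-type elements (the Jacobi group $\Gamma^J$ sitting inside $K(t)$, coming from the Heisenberg group and $\SL_2(\Z)$ acting on the $\tau$-variable), together with the translation $\omega \mapsto \omega + 1/t$ and a single extra generator such as the involution $\smtwomat{0}{I}{-I}{0}$ or a Fricke-type element; so once the Jacobi transformation properties are encoded, only finitely many further relations must be checked.

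\medskip

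\noindent\emph{Step 1 (Jacobi-group invariance).} First I would observe that each Hecke-translate $\phi|V_m \in J_{k,tm}\wh$, so the partial sums already transform correctly under the Jacobi subgroup acting on $(\tau,z)$: the Heisenberg translations $z \mapsto z + \lambda\tau + \mu$, the $\SL_2(\Z)$-action on $\tau$, and the lattice translation in $\omega$. The $\tilde\phi_{m}(\Omega) := (\phi|V_m)(\tau,z)e(mt\omega)$ are Jacobi forms of index $mt$, and the weight-$k$ automorphy for $P_{2,1}(\Z)$ is built into their definition; the Eisenstein term $G_k(\tau)$ likewise transforms correctly for $\SL_2(\Z)$ and is annihilated by the Heisenberg part. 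So $\Grit(\phi)$ satisfies the automorphy law $\mu(\gamma,\Omega)^{-k}\Grit(\phi)(\gamma\Omega)=\Grit(\phi)(\Omega)$ for all $\gamma$ in this parabolic piece of $K(t)$.

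\medskip

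\noindent\emph{Step 2 (the extra generator).} The heart of the matter is invariance under one more element of $K(t)$ not in the parabolic subgroup — this is where the specific structure of the $V_m$-operators and the Hecke-operator identities for Jacobi forms enter. Concretely one checks invariance under an element that interchanges (up to conjugation) the roles of $\tau$ and $t\omega$; the required identity follows from the compatibility of the $V_m$ with the Petersson/Hecke structure, equivalently from the symmetry of the Fourier coefficients $c(n,r;\phi|V_m)$ under $(n,m)\leftrightarrow(m,n)$ that is visible from the explicit divisor-sum formula for $c(n,r;\phi|V_m)$ given above. I expect \emph{this symmetry/functional-equation step to be the main obstacle}: one must re-index the double sum $\sum_m\sum_{d\mid(n,r,m)} d^{k-1}c(nm/d^2,r/d;\phi)$ and recognize that swapping $n$ and $m$ leaves it invariant, then translate that Fourier-coefficient symmetry into the needed modular transformation, handling the Eisenstein term $G_k$ and the constant $\delta(k)c(0,0;\phi)$ separately (they supply exactly the $m=0$ and $n=0$ "boundary" terms).

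\medskip

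\noindent\emph{Step 3 (convergence, holomorphy, cusp condition).} For convergence on $\Half_2$, I would bound the Fourier coefficients of $\phi$ (polynomial growth from weak holomorphy, or the stronger bound in the cusp case) and use the Hecke-operator formula to bound those of $\phi|V_m$, showing the series defining $\Grit(\phi)$ converges absolutely and uniformly on compacta; holomorphy is then automatic as a locally uniform limit of holomorphic functions. That $\Grit(\phi)\in M_k(K(t))^\epsilon$ with $\epsilon=(-1)^k$ follows by checking the involution condition \eqref{invcond} against the Fourier-coefficient symmetry just established — indeed the Fourier-Jacobi coefficients of $\Grit(\phi)$ are precisely $\phi|V_m$, and their coefficients satisfy $c(n,r;\phi|V_m)=c(m,r;\phi|V_n)$, which is \eqref{invcond} with sign $(-1)^k\epsilon=+1$. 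Finally, if $\phi\in J_{k,t}^{\cusp}$ then $c(0,0;\phi)=0$ kills the Eisenstein term, and the cusp condition at all zero-dimensional cusps of $K(t)$ reduces, via the Fourier-Jacobi expansion and the known cusp structure of $K(t)$, to the cuspidality of each $\phi|V_m$, which is inherited from that of $\phi$. I would remark that full details are in \cite{GritArith}, \cite{Grit2}; here one is only sketching the argument.
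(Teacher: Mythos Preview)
The paper does not give a proof of this theorem at all; it is stated as a cited result from \cite{GritArith} and \cite{Grit2}, with the citations placed in the theorem header itself. So there is no ``paper's own proof'' to compare your proposal against.

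That said, your sketch is a faithful outline of the standard argument in those references: invariance under the parabolic (Jacobi) subgroup is built into the $V_m$, the remaining generator is handled via the Fourier-coefficient symmetry $c(n,r;\phi|V_m)=c(m,r;\phi|V_n)$ visible from the divisor-sum formula, and convergence/cuspidality follow from coefficient bounds. You correctly flag the $(n,m)\leftrightarrow(m,n)$ symmetry as the crux and correctly tie the $\epsilon=(-1)^k$ claim to the involution condition~(\ref{invcond}). Your closing remark that ``full details are in \cite{GritArith}, \cite{Grit2}'' is exactly the paper's stance.
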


The {\it Gritsenko lift,\/} $\Grit(\phi)$, of the Jacobi form~$\phi$ 
defines a linear map 
$\Grit: J_{k,t} \to M_k\left( K(t) \right)^{\epsilon}$.  
Gritsenko lifts are hence symmetric.  
A different type of lifting construction is due to Borcherds (see \cite{B}) 
via his theory of multivariable  infinite products 
on orthogonal groups.
The divisor of a Borcherds Product is supported on rational quadratic divisors.
In the case of the Siegel upper half plane of degree two, these 
rational quadratic divisors are the 
Humbert modular surfaces.  
\begin{df}
Let $N \in \N$.  
For $n_o,r_o,m_o \in \Z$ with $m_o \ge 0$ and $\gcd(n_o,r_o,m_o)=1$,  
set $T_o = \mymat{n_o}{r_o/2}{r_o/2}{Nm_o}$ when $\det(T_o) < 0$.
We call
$$
\Hum(T_o)= K(N)^{+} \{ \Omega \in \Half_2:\, \tr( \Omega\,  T_o )=0 \} \subseteq K(N)^{+} \backslash \Half_2.
$$  
a {\sl Humbert  surface\/}.  
\end{df}
A Humbert surface $\Hum(T_o)$ 
only depends upon two pieces of data  \cite{GritHulek0}: 
the discriminant $D=r_o^2-4Nm_on_o$ 
and $r_o \mod 2N$.  

\begin{thm} 
\label{BP}
{\rm (\cite{GN1}, \cite{GritNiku98PartII}, \cite{Grit24})}   
Let $N, N_o \in \N$.    
Let $\Psi \in J_{0,N}\wh$ be a weakly holomorphic Jacobi form with Fourier expansion
$$
\Psi(\tau,z) = \sum_{n,r \in \Z: \, n \ge -N_o } c\left( n,r \right) q^n \zeta^r
$$
and $c(n,r) \in \Z$ for $4Nn-r^2 \le 0$.  
Then we have  $c(n,r) \in \Z$ for all $n,r \in \Z$.  
We set
\begin{align*}
&24A= \sum_{\ell \in \Z} c(0,\ell); \quad 
2 B = \sum_{\ell \in \N} \ell c(0,\ell); \quad 
4 C = \sum_{\ell \in \Z} \ell^2 c(0,\ell);   \\
&D_0 = \sum_{n \in \Z: \, n <0} \sigma_0(-n) c(n,0); \ 
k= \frac12 c(0,0); \ 
\chi = \epsilon^{24A} \times v_H^{2B}.
\end{align*}
There is a function $\Borch(\Psi) \in M_k\mero\left( K(N), \chi \right)$ 
whose divisor in $K(N)^{+} \backslash \Half_2$ consists of Humbert surfaces 
$\Hum(T_o)$ 
for $T_o = \mymat{n_o}{r_o/2}{r_o/2}{Nm_o}$ with $\gcd(n_o,r_o,m_o)=1$ and $m_o \ge 0$.  
The multiplicity of $\Borch(\Psi) $ on $\Hum(T_o)$ is 
$\sum_{n \in \N} c(n^2n_om_o, nr_o)$.  
 We have,  
$$
\Borch(\Psi)(F_N'{\Omega}F_N)=(-1)^{k+D_0} \Borch(\Psi)(\Omega), 
\text{ for $\Omega \in \Half_2$. }  
$$ 
For sufficiently large  $\lambda$, for $\Omega=\mymat{\tau}zz{\omega} \in \Half_2$ 
and $q=e(\tau)$, $\zeta=e(z)$, $\xi=e(\omega)$, 
the following product converges on $\{\Omega \in \Half_2: \IM \Omega > \lambda I_2 \}$:  
$$
\Borch(\Psi)(\Omega){=}
q^A \zeta^B \xi^C 
\prod_{\substack{ n,r,m \in \Z:\, m \ge 0, \text{\rm\ if $m=0$ then $n \ge 0$} \\  \text{\rm and if $m=n=0$ then $r < 0$. } }}
\left( 1-q^n \zeta^r \xi^{Nm} \right)^{ c(nm,r) }
$$
and is on $\{\Omega \in \Half_2: \IM \Omega > \lambda I_2 \}$ 
a rearrangement of
\begin{equation}
\label{B3}
\Borch(\Psi)= 
\left(  \eta^{c(0,0)}  \prod_{ \ell \in \N} \left( {{\tilde \vartheta}_{\ell}}/{\eta} \right)^{c(0,\ell)} \right) 
\exp\left( - \Grit({ \Psi })\right).  
\end{equation}
\end{thm}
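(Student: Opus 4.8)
The plan is to establish Theorem~\ref{BP} by assembling the known one-variable and quasi-pullback versions of Borcherds' construction and then tracking how the data of $\Psi$ translates into the multiplier system, the divisor, the symmetry under $F_N$, and the two product expansions. The starting point will be Borcherds' singular theta lift in the orthogonal setting applied to the lattice of signature $(2,3)$ whose modular variety is $K(N)^+\backslash\Half_2$; the input is the vector-valued modular form of weight $-1/2$ attached to the Jacobi form $\Psi\in J_{0,N}\wh$ via the theta decomposition. The first step is therefore to recall the dictionary (from \cite{GN1}, \cite{GritNiku98PartII}, \cite{Grit24}) between $J_{0,N}\wh$ and the relevant space of vector-valued nearly holomorphic modular forms, and to note that the integrality statement --- $c(n,r)\in\Z$ for the finitely many ``negative norm'' indices forces $c(n,r)\in\Z$ everywhere --- is exactly the statement that the principal part with integer coefficients produces a vector-valued form with integer Fourier coefficients, which is classical.

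**Next I would** identify the characteristic numbers. The exponents $A$, $B$, $C$ are the components of the Weyl vector of the Weyl chamber in which $\IM\Omega\gg 0$ lies; one computes them as the linear, by pairing the negative-norm part of $\Psi$ against the relevant roots, which is precisely the content of the formulas $24A=\sum_\ell c(0,\ell)$, $2B=\sum_{\ell\in\N}\ell\, c(0,\ell)$, $4C=\sum_\ell \ell^2 c(0,\ell)$. The weight is $k=\tfrac12 c(0,0)$ because the weight of a Borcherds product equals $c(0,0)/2$ for the component indexed by $0$; this is standard. For the multiplier, Borcherds' theorem gives a character on the orthogonal group, and restricting to $P_{2,1}(\Z)\subset K(N)$ and comparing with the explicit Heisenberg and $\SL_2$ generators shows the multiplier is $\epsilon^{24A}v_H^{2B}$, hence the stated $\chi=\epsilon^{24A}\times v_H^{2B}$ once one checks (using the criterion recalled just before Theorem~\ref{BP}) that this character indeed descends to $K(N)^+$ with value $1$ on $\mu_N$. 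The divisor statement --- support on Humbert surfaces $\Hum(T_o)$ with multiplicity $\sum_{n\in\N}c(n^2 n_o m_o, n r_o)$ --- is the orthogonal-group divisor formula translated through the identification of rational quadratic divisors with Humbert surfaces, using that $\Hum(T_o)$ depends only on the discriminant and $r_o\bmod 2N$.

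**The sign under the Fricke involution** $\Omega\mapsto F_N'\Omega F_N$ requires a separate argument: $\mu_N$ normalizes $K(N)$ but the Borcherds form need not be a $\mu_N$-eigenform a priori, yet its divisor is $\mu_N$-invariant, so $\Borch(\Psi)\circ\mu_N/\Borch(\Psi)$ is a nowhere-zero holomorphic modular form for $K(N)$ of weight $0$, hence a constant $\pm1$; evaluating that constant on the one-dimensional boundary (or on the product expansion, comparing leading terms $q^A\zeta^B\xi^C$) pins it down as $(-1)^{k+D_0}$, where $D_0=\sum_{n<0}\sigma_0(-n)c(n,0)$ enters through the behaviour of the $\ell=0$ part of the product under $z\mapsto -z$ combined with the $\tau\leftrightarrow N\omega$ swap. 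Finally, the two displayed product expansions: the first, $q^A\zeta^B\xi^C\prod(1-q^n\zeta^r\xi^{Nm})^{c(nm,r)}$, is the raw Borcherds infinite product in the chamber $\IM\Omega>\lambda I_2$; the second, the factorization \eqref{B3} as $\bigl(\eta^{c(0,0)}\prod_\ell(\tilde\vartheta_\ell/\eta)^{c(0,\ell)}\bigr)\exp(-\Grit(\Psi))$, is obtained by separating the $m=0$ factors (which reassemble, via the $\eta$ and $\vartheta$ product formulas, into the eta/theta prefactor with the $q^A\zeta^B\xi^C$ absorbed) from the $m\ge1$ factors, taking logarithms, and recognizing the resulting double sum as $-\Grit(\Psi)$ through the Hecke-operator formula for $V_m$ acting on $\Psi$ together with the Eisenstein contribution hidden in the $n<0,\,m=0$ terms. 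The main obstacle will be step three: proving the precise Fricke sign $(-1)^{k+D_0}$ rather than merely $\pm1$, since it demands a careful bookkeeping of how the prefactor and each family of product factors transform under the involution and how the $\eta$-multiplier and the half-integral powers of $\zeta$ recombine --- everything else is a matter of quoting the orthogonal-group machinery and translating notation.
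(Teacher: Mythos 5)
The paper offers no proof of Theorem~\ref{BP}: it is quoted from \cite{GN1}, \cite{GritNiku98PartII} and \cite{Grit24}, so there is no internal argument to measure yours against. Judged against those sources, your outline follows the standard route and is essentially correct: one lifts the vector-valued weight $-1/2$ form obtained from $\Psi$ by theta decomposition through Borcherds' product on the orthogonal group of a signature $(2,3)$ lattice, reads off $A,B,C$ as the Weyl vector of the chamber at infinity and $k=\frac12 c(0,0)$ as the weight, pins down the character by restriction to $P_{2,1}(\Z)$ together with its value on $\mu_N$, translates rational quadratic divisors into Humbert surfaces, and derives \eqref{B3} by separating the $m=0$ factors from the $m\ge 1$ factors and matching logarithms with the $V_m$ coefficient formula (note only that for weight-$0$ input $\delta(0)=0$, so there is no Eisenstein term to account for). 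Two points deserve more than the treatment you give them. First, the integrality propagation is not a tautology: that integrality of the coefficients with $4Nn-r^2\le 0$ forces integrality of all coefficients is a genuine structural lemma about $J_{0,N}\wh$ (the singular coefficients determine the form, and the space admits a basis over $\Z$), and it must be cited or proved rather than dismissed as a restatement. Second, your mechanism for the Fricke sign is sound --- the quotient $\Borch(\Psi)(F_N'\Omega F_N)/\Borch(\Psi)(\Omega)$ is a nowhere-vanishing weight-$0$ form for $K(N)$ with trivial divisor, hence a constant of square $1$ --- but the evaluation is not a comparison of leading monomials: under $\tau\leftrightarrow N\omega$, $z\mapsto -z$ the prefactor $q^A\zeta^B\xi^C$ is not fixed (in the $N=587$ example it goes to $q\,\zeta^{-68}\xi^{1174}$), and the balance is restored only after renormalizing the finitely many product factors that leave the standard index range, namely those with $nm<0$ and those with $m=n=0$, $r<0$; each renormalization $1-x^{-1}=-x^{-1}(1-x)$ contributes a sign, the $r=0$ ones being counted exactly by $D_0=\sum_{n<0}\sigma_0(-n)c(n,0)$. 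With those two items supplied, the proposal matches the proofs in the cited references.
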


\section{Theta Blocks and Inflation}

In the previous section, theta blocks occured naturally as the leading Fourier-Jacobi 
coefficient of a Borcherds product, see \cite{GSZ} for a full exposition.  
A theta block is a meromorphic function $\TB_f: \Half \times \C \to \C$, 
$$
{\TB}_f = \eta^{f(0)}\, 
\prod_{\ell \in \N} 
\left( {\vartheta_{\ell}}/{\eta} \right)^{f(\ell)}, 
$$
for some even function $f: \Z \to \Z$ of finite support.  We see that 
$\TB_f \in J_{k,m}\mero(\chi)$, 
where the weight~$k$, index~$m$ and character~$\chi$ are given by
\begin{align*}
k  &= \frac12 f(0);  \quad
4m  = \sum_{\ell \in \Z} \ell^2 f(\ell); \,   \\
\chi &= \epsilon_{\eta}^{24A} v_H^{2B};  \quad
24A = \sum_{\ell \in \Z} f(\ell); \quad 2B = \sum_{\ell \in \N} \ell f(\ell).  
\end{align*}
We call the Laurent polynomial 
$$ 
\sum_{\ell \in \Z} f(\ell) \zeta^{\ell}= 
f(0) + \sum_{\ell\in\N} f(\ell)(\zeta^\ell+\zeta^{-\ell})
$$
the {\em germ} of the theta block $\TB_f$.  Note that the germ determines the theta block.
By the {\it shape\/} of a theta block we simply mean the number of factors of $\eta$ 
and of $\vartheta$ in the numerator and denominator.  
To test whether a weakly holomorphic theta block is a Jacobi form, we need to see whether the 
order function \cite{GSZ} 
$$
\ord( {\TB}_f ; x)= \frac{k}{12} + \frac12 \sum_{\ell \in \N} f(\ell) {\bar B}_2(\ell x)
$$
is nonnegative for $0 \le x \le 1$.  If $\ord( {\TB}_f ; x)>0$  then 
$\TB_f \in J_{k,m}^\cusp(\chi)$.  
Here, ${\bar B}_2(x)= B_2(x-\ldL x \rdL)$ is the periodic extension of the second 
Bernoulli polynomial $B_2(x) =x^2-x+\frac16$.  
If we multiply the defining infinite products together, we obtain
\begin{align*}
{\TB}_f(\tau,z) &= 
q^{A}\, 
\prod_{\ell \in \N} \left( \zeta^{\ell/2} - \zeta^{-\ell/2} \right)^{f(\ell)}\, 
\prod_{n \in \N} \prod_{\ell \in \Z} (1-q^n \zeta^{\ell} )^{f(\ell)}  \\
&=
q^{A}\, 
\zeta^{ B}  
\prod_{\substack{n, \ell \in \Z: \, n \ge 0 \text{ and} \\ \text{ if $n=0$ then $\ell<0$}}} (1-q^n \zeta^{\ell} )^{f(\ell)} .  
\end{align*}
We set $\BTB_f(\zeta)= \prod_{\ell \in \N} \left( \zeta^{\ell/2} - \zeta^{-\ell/2} \right)^{f(\ell)}$ 
and affectionately refer to this as the {\it baby theta block.\/}  The weight and the baby 
theta block determine the theta block.

When all the $f(\ell)$ are nonnegative, 
then the theta functions are all in the numerator and 
we say that ${\TB}_f$ is a 
theta block {\it without theta denominator.\/}  
Theta blocks without theta denominator are automatically weakly holomorphic. 
In computations involving theta blocks without theta denominator 
it is often more convenient 
to describe theta blocks in terms of lists.  Let $T=T_f$ be the list 
consisting of $f(1)$ $1$s followed by $f(2)$ $2$s, $\dots$, etc{.}   
and let $\LA=(-T):[\overbrace{0,0,\dots,0}^{2k}]:T$ be defined as the concatenation of three lists.  
When we are dealing with lists we have $24A = | \LA | $ and we write 
\begin{align*}
{\TB}_k(T)(\tau,z) &= 
\eta^{2k}\prod_{d\in T} \left(\vartheta_{d}/\eta  \right) \\
&=q^{A}\, 
\prod_{d \in T} \left( \zeta^{d/2} - \zeta^{-d/2} \right)\, 
\prod_{n \in \N} \prod_{ e \in \LA} (1-q^n \zeta^{e} )
\end{align*}
and write the baby theta block 
${\BTB}(T)= \prod_{d \in T} \left( \zeta^{d/2} - \zeta^{-d/2} \right)$.  
\smallskip

In order to construct paramodular Borcherds products we need 
a quotient~$\psi$ of a Jacobi form by a theta block of the same weight.  
This weight zero~$\psi$ should have integral Fourier coefficients and 
be weakly holomorphic as opposed to properly meromorphic.  
We now discuss two methods to guarantee that both these  properties hold.  
The first method works for theta blocks without theta denominator 
and can fail otherwise. 
The second method is more general and 
does allow the theta block to have a theta denominator.  
\smallskip

\noindent {\bf Method 1. \/}  
Let $\phi$ be a theta block without theta denominator.  
The quotient $\dfrac{\phi | V_{\ell} }{ \phi }$ has  integral Fourier coefficients and 
is weakly holomorphic, see \cite{GPY}, Corollary~{6.2}, for the case $\ell=2$.  
\smallskip

\noindent {\bf Method 2. \/} 
Suppose that ${\BTB}_f(\zeta)$ divides $ {\BTB}_g(\zeta)$ in $\Z[\zeta^{1/2}, \zeta^{-1/2}]$, then 
the weight zero Jacobi form 
$\psi=\dfrac{{\TB}_g }{{\TB}_f}$ has  integral Fourier coefficients and 
is weakly holomorphic.  
When we have this divisibility, we call $\TB_g$ an {\it inflation\/} of $\TB_f$. \smallskip

As a special case of method two, 
suppose that the theta block is $\phi={\TB}_k(T)$ for some list~$T$ and weight~$k$.  
Now take another list~$E$ of the same length with 
each entry of $T$, in some ordering, dividing the 
corresponding entry of~$E$; in this case, 
we clearly have that ${\BTB}(T)$ divides $ {\BTB}(E)$ in the ring $\Z[\zeta^{1/2}, \zeta^{-1/2}]$ and 
we call the theta block ${\TB}_k(E)$ 
a {\it strict inflation\/} of the theta block $ {\TB}_k(T)$.

\section{A Motivating Example}

In this section we use the theory of Borcherds products to construct a paramodular eigenform 
$f \in S_2\left( K(587) \right)^{-}$ that conjecturally shows the modularity of the 
isogeny class of rank one abelian surfaces defined over~$\Q$ of prime conductor 
$p=587$, the isogeny class represented by the Jacobian ${\mathcal A}_{587}^{-}$ of 
$y^2+ (x^3+x+1)y=-x^3-x^2$.  In \cite{PoorYuenPara} it is proven that $\dim  S_2\left( K(587) \right)^{-} \le 1$ 
and that $L({\mathcal A}_{587}^{-}, s, \text{Hasse-Weil})$ and $L(f,s, \text{spin})$ share the 
same Euler factors at~$2$ and~$3$, if $f$ can be proved to exist.  
The following 
analogy with the elliptic case led to the hope that $f$ is a Borcherds product.
The first odd rank elliptic curve over~$\Q$ has conductor~$37$ and the associated 
modular form in $S_2\left( \Gamma_0(37) \right)^{+}$ corresponds to a Jacobi form in 
$J_{2,37}^\cusp$.  From \cite{GSZ} we know that $J_{2,37}^\cusp$ is generated by the 
theta block $\TB_2(1,1,1,2,2,2,3,3,4,5)$ and thus is given by an infinite product.  
The smallest known prime conductor of an odd rank abelian surface over~$\Q$ is $587$, see \cite{BK}.  
Our Borcherds product construction of~$f$ proves the analogous result that the 
generator $f$ of $ S_2\left( K(587) \right)^{-}$ is a multivariable infinite product. 
This Borcherds product representation of the eigenform~$f$ will assist the computation of its 
eigenvalues.

We describe the experimental process that led to the rigorous construction of $f$ as a Borcherds product.  
Both formula~{(\ref{B3})} and the involution condition~{(\ref{invcond})} are important tools in the experimental process. 
Suppose that we have an antisymmetric Borcherds product $f \in S_k\left( K(p) \right)^{\epsilon}$ 
with Fourier-Jacobi expansion
$$
f= \phi_{p}\, e(p\omega) + \phi_{2p}\, e(2p\omega) + \cdots 
$$
The antisymmetry of $f$ implies that $\phi_{p} \in J_{k,p}^\cusp(2)$ 
because $c(1,r; \phi_{p}) = - c(1,r; \phi_{p})$.  
If $f$ is a Borcherds product then $\phi_{p}$ is a theta block;  indeed, 
$J_{2, 587}^\cusp(2)$ is spanned by the single theta block 
$$
\phi_{587}={\TB}_2(
1, 1, 2, 2, 2, 3, 3, 4, 4, 5, 5, 6, 6, 7, 8, 8, 9, 10, 11, 12, 13, 14
).
$$
As a Borcherds product, $f$ is determined by its first two Fourier-Jacobi coefficients 
$\phi_{p}$ and $\phi_{2p}$.  If we write $ \phi_{2p} = - \phi_{p} \vert V_2 + \Xi$, 
we claim this forces $\Xi \in J_{k,2p}^\cusp(2)$.  
The involution condition tells us the $q^1$ and $q^2$ coefficients of $\Xi$.  
In general 
\begin{align*}
c(n,r; \Xi) 
&= c(n,r; \phi_{p}\vert V_2) +c(n,r; \phi_{2p})  \\
&= c(2n,r; \phi_{p}) + 2^{k-1} c(\frac{n}2,\frac{r}2; \phi_{p})  +c(n,r; \phi_{2p}).
\end{align*}
The $q^1$-coefficient of $\Xi$ vanishes by antisymmetry
$$
c(1,r; \Xi) = c(2,r;\phi_p) + c(1,r; \phi_{2p}) 
= - c(1,r; \phi_{2p})  +  c(1,r; \phi_{2p})  =0,   
$$
so that  $\Xi \in J_{k,2p}^\cusp(2)$ as claimed.  
The $q^2$-coefficient of $\Xi$ is the $q^4$-coefficient of $\phi_p$: 
\begin{align*}
c(2,r; \Xi) 
&= c(4,r; \phi_{p}) + 2^{k-1} c(1,\frac{r}2; \phi_{p})  +c(2,r; \phi_{2p}) =  c(4,r; \phi_{p}).
\end{align*} 
A direct computation reveals that the $q^4$-coefficient of $\phi_{587}$ 
is a baby theta block:  
 $\Coeff(q^4, \phi_{587})=  \Coeff(q^2, \Xi)= $
\begin{align*}
&\BTB(1, 10, 2, 2, 18, 3, 3, 4, 4, 15, 5, 6, 6, 7, 8, 16, 9, 10, 22, 12, 13, 14) .
\end{align*}
So we set 
\begin{align*}
\Xi=&{\TB}_2(1, 10, 2, 2, 18, 3, 3, 4, 4, 15, 5, 6, 6, 7, 8, 16, 9, 10, 22, 12, 13, 14)
\end{align*}
and check that $\Xi \in J_{2,1174}^\cusp(2)$.  
The list for this theta block $\Xi$ has been written so that each entry is visibly an integral multiple 
of the corresponding entry in the theta block $\phi_{587}$; 
thus $\Xi$ is a strict inflation of  $\phi_{587}$.  

We now combine our two methods for constructing weight zero weakly holomorphic 
Jacobi forms with integral Fourier coefficients and define
$$
\psi =\dfrac{\phi_{587}\vert V_2 - \Xi}{\phi_{587}} \in J_{0,587}\wh(\Z).  
$$
Many Fourier coefficients of $\psi = \sum_{n,r} c(n,r; \psi) q^n \zeta^r$ 
can be seen at \cite{PoorYuenWebPara587minus} and we give the singular part of $\psi$ here, 
up to $q^{\ldL  p/4  \rdL } = q^{147}$: 
\begin{equation*}\label{psisingularpart}
\begin{aligned}
&\psi_{\text{\rm sing}} = \frac{1}{q} + 4 + \zeta^{-14} + 
  \zeta^{-13} + \zeta^{-12} {+} 
  \zeta^{-11} + \zeta^{-10} {+} 
  \zeta^{-9} + 2{\zeta^{-8}} {+} 
  \zeta^{-7}+ \\
&{2}{\zeta^{-6}} + 
 {2}{\zeta^{-5}} + 
  {2}{\zeta^{-4}} + 
  {2}{\zeta^{-3}} + 
 {3}{\zeta^{-2}} + 
  {2}{\zeta^{-1}} + 2\,\zeta + 
  3\,\zeta^2 + 2\,\zeta^3 +\\&
 2\,\zeta^4 + 
  2\,\zeta^5 + 2\,\zeta^6 + \zeta^7 + 
  2\,\zeta^8 + \zeta^9 + \zeta^{10} + 
  \zeta^{11} + \zeta^{12} + \zeta^{13} + 
  \zeta^{14} +\\&
 q\,
   \left( \zeta^{-50} + 
     {2}{\zeta^{-49}} + 
     2\,\zeta^{49} + \zeta^{50}
     \right)  + 
  q^2\,\left( \zeta^{-69} + 
     \zeta^{69} \right)  + \\&
  q^3\,\left( \zeta^{-85} + 
     {2}{\zeta^{-84}} + 
     2\,\zeta^{84} + \zeta^{85}
     \right)  + 
  q^4\,\left( \zeta^{-98} + 
     {2}{\zeta^{-97}} + 
     2\,\zeta^{97} + \zeta^{98}
     \right)  + \\&
  q^5\,\left( \zeta^{-109} + 
     \zeta^{109} \right)  + 
  q^6\,\left( \zeta^{-119} + 
     \zeta^{119} \right)  + 
  q^{11}\,\left( \zeta^{-161} + 
     \zeta^{161} \right)  + \\&
  q^{12}\,\left( {2}
      {\zeta^{-168}} + 2\,\zeta^{168}
     \right)  + 
  q^{13}\,\left( {2}
      {\zeta^{-175}} + 2\,\zeta^{175}
     \right)  + 
  q^{15}\,\left( \zeta^{-188} + 
     \zeta^{188} \right)  + \\&
  q^{16}\,\left( {2}
      {\zeta^{-194}} + 2\,\zeta^{194}
     \right)  + 
  q^{17}\,\left( \zeta^{-200} + 
     \zeta^{200} \right)  + 
  q^{27}\,\left( \zeta^{-252} + 
     \zeta^{252} \right)  + \\&
  q^{29}\,\left( {3}
      {\zeta^{-261}} + 3\,\zeta^{261}
     \right)  + 
  q^{31}\,\left( \zeta^{-270} + 
     \zeta^{270} \right)  + 
  q^{35}\,\left( \zeta^{-287} + 
     \zeta^{287} \right)  + \\&
  q^{37}\,\left( \zeta^{-295} + 
     \zeta^{295} \right)  + 
  q^{67}\,\left( \zeta^{-397} + 
     \zeta^{397} \right)  + 
  q^{74}\,\left( \zeta^{-417} + 
     \zeta^{417} \right)  + \\&
  q^{78}\,\left( {2}
      {\zeta^{-428}} + 2\,\zeta^{428}
     \right)  + 
  q^{79}\,\left( \zeta^{-431} + 
     \zeta^{431} \right)  + 
  q^{85}\,\left( \zeta^{-447} + 
     \zeta^{447} \right)  + \\&
  q^{87}\,\left( {2}
      {\zeta^{-452}} + 2\,\zeta^{452}
     \right)  + 
  q^{94}\,\left( \zeta^{-470} + 
     \zeta^{470} \right)  + 
  q^{101}\,\left( {2}
      {\zeta^{-487}} + 2\,\zeta^{487}
     \right)  + \\&
  q^{106}\,\left( \zeta^{-499} + 
     \zeta^{499} \right)  + 
  q^{109}\,\left( \zeta^{-506} + 
     \zeta^{506} \right)  + 
  q^{116}\,\left( \zeta^{-522} + 
     \zeta^{522} \right)  + \\&
  q^{126}\,\left( {2}
      {\zeta^{-544}} + 2\,\zeta^{544}
     \right)  + 
  q^{133}\,\left( \zeta^{-559} + 
     \zeta^{559} \right)  + 
  q^{134}\,\left( \zeta^{-561} + 
     \zeta^{561} \right)
\\&+ O(q^{148}).  
\end{aligned}
\end{equation*}
The Borcherds product is holomorphic because these singular 
Fourier coefficients are positive. We compute $A=2$, $B=68$, $C=587$, 
and $D_0=1$, so that 
$f= \Borch(\psi) \in  M_2\left( K(587) \right)^{-}= S_2\left( K(587) \right)^{-}$ as desired. 
We have the infinite product expansion 
$$
f\smtwomat{\tau}zz\omega = 
q^2 \zeta^{68} \xi^{587} 
\prod_{\substack{ n,r,m \in \Z:\, m \ge 0, \text{\rm\ if $m=0$ then $n \ge 0$} \\  \text{\rm and if $m=n=0$ then $r < 0$. } }}
\left( 1 - q^n \zeta^r \xi^{587m} \right)^{c(nm,r; \psi)}
$$

\section{First Version of Main Result}

We formalize the considerations that made the example of the previous section work, 
and isolate a condition that guarantees the existence of an 
antisymmetric, meromorphic, paramodular Borcherds product.  
This is the first version of our main result.  

\begin{thm}
\label{mr1}
Fix $m, \ell \in \N$ with $k=24-\ell \ge 0$, and 
 fix $ c \in \N^{\ell} $.  
 Assume that $d \in \N^{\ell}$ satisfies 
 \begin{equation}
\label{BR}
m \prod_{j=1}^{\ell} \dfrac{\zeta^{\frac12 c_j d_j}- \zeta^{-\frac12 c_j d_j}}{\zeta^{\frac12 d_j}- \zeta^{-\frac12 d_j}}= 
 \sigma_2(d) + (2k-1)\sigma_1(d) + 2k^2-3k+\ell . 
\end{equation}
 in $\Z[\zeta^{1/2}, \zeta^{-1/2}]$, 
 where $r_j = \zeta^{d_j} + \zeta^{-d_j}$, 
 and the two symmetric functions are given by 
  $\sigma_2(d) = \sum_{1 \le i < j \le \ell} r_i r_j$ 
 and $\sigma_1(d) = \sum_{1 \le i \le \ell} r_i $. 
 
Then there exists a meromorphic, antisymmetric 
 Borcherds product 
 $$
	 \Borch(\psi)={\tilde \phi \/}\exp\left( - \Grit({ \psi })\right) \in M_k\mero\left( K(N) \right)^{\epsilon},
 $$
 where $\epsilon =(-1)^{k+1}$, 
 $N= \frac12 \sum_{j=1}^{\ell} d_j^2 \in \N$, 
 $\phi=\TB_k(d_1, \ldots, d_{\ell})$, and 
$\Xi=\TB_k(c_1d_1, \ldots, c_{\ell}d_{\ell})$, and 
$$
\psi =\dfrac{\phi\vert V_2 -m \Xi}{\phi} \in J_{0,N}\wh(\Z).  
$$ 
\end{thm}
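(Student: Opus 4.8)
The plan is to verify that $\psi$ is a weakly holomorphic Jacobi form of weight $0$ and index $N$ with integral Fourier coefficients, so that Theorem~\ref{BP} applies to $\psi$, and then to extract every asserted property of $\Borch(\psi)$ from the formulas in that theorem; the polynomial identity~(\ref{BR}) enters twice, once to fix the index of $\psi$ and once to pin down its $q^0$-part. First I would note that, since every $d_j\in\N$, the form $\phi=\TB_k(d_1,\dots,d_\ell)$ is a theta block without theta denominator, hence weakly holomorphic, of weight $k$ and index $\tfrac12\sum_jd_j^2=N$, and likewise $\Xi=\TB_k(c_1d_1,\dots,c_\ell d_\ell)$ is a theta block without theta denominator of weight $k$ which, as $d_j\mid c_jd_j$, is a strict inflation of $\phi$. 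By Method~1, $\phi\vert V_2/\phi\in J_{0,N}\wh(\Z)$, and by Method~2, $\Xi/\phi$ is a weakly holomorphic Jacobi form with integral Fourier coefficients of weight $0$ and index $\tfrac12\sum_j(c_jd_j)^2-N$. For $\psi=\phi\vert V_2/\phi-m\,\Xi/\phi$ to be a single Jacobi form of index $N$ one needs $\sum_j(c_jd_j)^2=2\sum_jd_j^2$, and this I would deduce from~(\ref{BR}) by putting $\zeta=e^{2\theta}$: then $r_j=2\cosh(2d_j\theta)$ and each factor on the left of~(\ref{BR}) becomes $\sinh(c_jd_j\theta)/\sinh(d_j\theta)=c_j\bigl(1+\tfrac16(c_j^2-1)d_j^2\theta^2+O(\theta^4)\bigr)$, so matching the $\theta^0$ and $\theta^2$ coefficients of~(\ref{BR}) gives first $m\prod_jc_j=2(k+\ell)^2-3(k+\ell)=1080$ and then $\sum_j(c_j^2-1)d_j^2=\sum_jd_j^2$, i.e.\ $\sum_j(c_jd_j)^2=2\sum_jd_j^2$. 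Hence $\Xi$ has index $2N$ and $\psi\in J_{0,N}\wh(\Z)$.

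Next I would compute the $q^0$-part of $\psi$ by working with the identity $\psi\phi=\phi\vert V_2-m\Xi$, whose $q$-coefficients lie in $\Z[\zeta^{1/2},\zeta^{-1/2}]$. From $\TB_k(T)=q^2\,\BTB(T)\prod_{n\in\N}\prod_{e\in\LA}(1-q^n\zeta^e)$, with $\LA$ the list $(-T):[0,\dots,0]:T$ containing $2k$ zeros, one reads off $\Coeff(q^2,\phi)=\BTB(T)$, $\Coeff(q^3,\phi)=-S_1\BTB(T)$ and $\Coeff(q^4,\phi)=\BTB(T)\bigl(\tfrac12(S_1^2-S_2)-S_1\bigr)$, where $S_1=\sum_{e\in\LA}\zeta^e=2k+\sigma_1(d)$ and $S_2=\sum_{e\in\LA}\zeta^{2e}$, and also $\Coeff(q^2,\Xi)=\BTB(\Xi)$. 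The formula for the action of $V_2$ on Fourier coefficients gives $\phi\vert V_2=2^{k-1}\phi(2\tau,2z)+\sum_{m\in\N}\Coeff(q^{2m},\phi)\,q^m$, and $\phi(2\tau,2z)$ begins at $q^4$ because $k+\ell=24$, so $\Coeff(q^1,\phi\vert V_2)=\BTB(T)$ and $\Coeff(q^2,\phi\vert V_2)=\Coeff(q^4,\phi)$. Comparing $q^1$-coefficients in $\psi\phi=\phi\vert V_2-m\Xi$ forces $\Coeff(q^{-1},\psi)=1$, and comparing $q^2$-coefficients gives
$$
\BTB(T)\,\Coeff(q^0,\psi)=\tfrac12(S_1^2-S_2)\,\BTB(T)-m\,\BTB(\Xi);
$$
dividing by the non-zero-divisor $\BTB(T)$, which divides $\BTB(\Xi)$ since the inflation is strict, gives $\Coeff(q^0,\psi)=\tfrac12(S_1^2-S_2)-m\,\BTB(\Xi)/\BTB(T)$. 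But $m\,\BTB(\Xi)/\BTB(T)=m\prod_j(\zeta^{c_jd_j/2}-\zeta^{-c_jd_j/2})/(\zeta^{d_j/2}-\zeta^{-d_j/2})$ is precisely the left-hand side of~(\ref{BR}), hence equals $\sigma_2(d)+(2k-1)\sigma_1(d)+2k^2-3k+\ell$, while the elementary identity $\sigma_1(d)^2=\sum_j(\zeta^{2d_j}+\zeta^{-2d_j})+2\ell+2\sigma_2(d)$ reduces $\tfrac12(S_1^2-S_2)$ to $\sigma_2(d)+2k\sigma_1(d)+2k^2-k+\ell$. Subtracting, $\Coeff(q^0,\psi)=2k+\sigma_1(d)=2k+\sum_j(\zeta^{d_j}+\zeta^{-d_j})$.

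Finally I would invoke Theorem~\ref{BP} for $\psi\in J_{0,N}\wh(\Z)$. By the previous step, in the notation of that theorem $c(0,0;\psi)=2k$, $c(0,\ell;\psi)=\#\{j:d_j=\ell\}$ for $\ell\ge1$, $c(-1,0;\psi)=1$, and $c(n,0;\psi)=0$ for $n<-1$; hence the weight is $\tfrac12c(0,0;\psi)=k$, one has $24A=\sum_{r\in\Z}c(0,r;\psi)=2k+2\ell=48$, $2B=\sum_{r\in\N}r\,c(0,r;\psi)=\sum_jd_j$, and $D_0=\sigma_0(1)\,c(-1,0;\psi)=1$. The character $\chi=\epsilon^{24A}\times v_H^{2B}=\epsilon^{48}\times v_H^{\sum_jd_j}$ is trivial, since $\epsilon^{24}=1$ and $\sum_jd_j\equiv\sum_jd_j^2=2N\equiv0\pmod2$ --- this last congruence being exactly the hypothesis $N\in\N$ --- so $\Borch(\psi)\in M_k\mero(K(N))$. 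Theorem~\ref{BP} then gives $\Borch(\psi)(F_N'\Omega F_N)=(-1)^{k+D_0}\Borch(\psi)(\Omega)=(-1)^{k+1}\Borch(\psi)(\Omega)$, so $\Borch(\psi)\in M_k\mero(K(N))^{\epsilon}$ with $\epsilon=(-1)^{k+1}$, and the form is antisymmetric because $(-1)^k\epsilon=-1$; and~(\ref{B3}) identifies the theta-block factor of $\Borch(\psi)$ as $\eta^{c(0,0;\psi)}\prod_{\ell\in\N}(\tilde\vartheta_\ell/\eta)^{c(0,\ell;\psi)}=\eta^{2k}\prod_j(\tilde\vartheta_{d_j}/\eta)=\tilde\phi$, whence $\Borch(\psi)=\tilde\phi\exp(-\Grit(\psi))$, as asserted.

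The step I expect to be most delicate is the computation of $\Coeff(q^0,\psi)$ in the second paragraph: it rests on the splitting $\phi\vert V_2=2^{k-1}\phi(2\tau,2z)+\sum_{m\in\N}\Coeff(q^{2m},\phi)\,q^m$ and on careful tracking of $q$-orders --- where it is crucial that $k+\ell=24$ makes $\phi$ and $\Xi$ begin at $q^2$ while $\phi(2\tau,2z)$ begins at $q^4$ --- followed by the observation that~(\ref{BR}) together with the symmetric-function identity for $\sigma_1(d)^2$ makes $\Coeff(q^0,\psi)$ collapse to the germ $2k+\sigma_1(d)$ of $\phi$. A lesser point is that the divisions by $\BTB(T)$ and by $\phi$ are to be understood in $\Z[\zeta^{1/2},\zeta^{-1/2}]$ or its fraction field, which is harmless because Methods~1 and~2 already guarantee that $\psi$ has Laurent-polynomial $q$-coefficients.
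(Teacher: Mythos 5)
Your proof is correct and follows essentially the same route as the paper's: the paper's Lemma~\ref{lemma2} extracts $m\prod_j c_j=1080$ and $\sum_j c_j^2d_j^2=2\sum_j d_j^2$ from the value and second derivative of~(\ref{BR}) at $\zeta=1$ (your $\theta^0$/$\theta^2$ matching under $\zeta=e^{2\theta}$ is the same computation), and the paper then expands $\phi$, $\phi\vert V_2$ and $\Xi$ to exactly the orders you use, finds $\psi=q^{-1}+2k+\sigma_1(d)+O(q)$, and invokes Theorem~\ref{BP} with $A=2$, $2B=\sum_j d_j$, $D_0=1$. The one point you elide is the integrality of $N$, equivalently the evenness of $\sum_j d_j^2$ and hence of $\sum_j d_j$, which you need for the triviality of $v_H^{2B}$: this is part of the conclusion, not a hypothesis. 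The paper gets it by writing the left side of~(\ref{BR}) as $m\,\zeta^{\frac12\sum_j d_j(1-c_j)}\prod_j\bigl(\sum_{i=0}^{c_j-1}\zeta^{i d_j}\bigr)$ and noting the right side lies in $\Z[\zeta,\zeta^{-1}]$, forcing $\sum_j d_j\equiv\sum_j c_jd_j\pmod 2$ and hence $\sum_j d_j^2\equiv\sum_j c_j^2d_j^2=2\sum_j d_j^2\equiv 0\pmod 2$. With that half-line added, your argument is complete and coincides with the paper's.
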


\begin{lm}
\label{lemma2}
With the assumptions of Theorem~\ref{mr1}, we have 
\begin{align*}
& m \prod_{j=1}^{\ell} c_j = 1080; \quad 
 \sum_{j=1}^{\ell} c_j^2 d_j^2 = 2 \sum_{j=1}^{\ell} d_j^2; \quad 
 \sum_{j=1}^{\ell} d_j^2 \in 2 \Z.
\end{align*}
\end{lm}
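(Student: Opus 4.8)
The plan is to derive all three identities from the single polynomial identity~\eqref{BR} by specializing it, and its low-order Taylor data, at the two square roots of unity. Writing $x=\zeta^{1/2}$, I would begin by observing that each factor on the left of~\eqref{BR} is the honest Laurent polynomial
$$
q_j(x)=\frac{x^{c_jd_j}-x^{-c_jd_j}}{x^{d_j}-x^{-d_j}}=\sum_{t=0}^{c_j-1}x^{(2t+1-c_j)d_j},
$$
while the right-hand side is a polynomial in the $r_j=x^{2d_j}+x^{-2d_j}$; thus \eqref{BR} is an identity in $\Z[x,x^{-1}]$ that may be specialized at $x=1$, at $x=-1$, and differentiated at $x=1$.

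For the first identity I would set $x=1$: then $q_j(1)=c_j$ and $r_j=2$, so the left side becomes $m\prod_jc_j$, while $\sigma_1(d)=2\ell$ and $\sigma_2(d)=4\binom{\ell}{2}$, so the right side collapses to $2(k+\ell)^2-3(k+\ell)$, which equals $1080$ because $k+\ell=24$. For the second identity I would put $x=e^{u/2}$ and compare coefficients of $u^2$. On the left the expansion $q_j(e^{u/2})=c_j\bigl(1+\tfrac{(c_j^2-1)d_j^2}{24}u^2+O(u^4)\bigr)$---the linear term cancelling by the symmetry $t\mapsto c_j-1-t$ and the quadratic coefficient coming from $\sum_{t=0}^{c_j-1}(2t+1-c_j)^2=\tfrac{1}{3}c_j(c_j^2-1)$---together with the first identity gives the $u^2$-coefficient $45\sum_j(c_j^2-1)d_j^2$; on the right, $r_j=2+d_j^2u^2+O(u^4)$ gives $45\sum_jd_j^2$. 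Equating and cancelling $45$ yields $\sum_jc_j^2d_j^2=2\sum_jd_j^2$.

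For the third identity I would first reduce the second modulo~$2$, using $n^2\equiv n\pmod 2$, to obtain $\sum_jc_jd_j\equiv0\pmod2$, and then substitute $x=-1$ in~\eqref{BR}. Since $(2t+1-c_j)d_j\equiv(1+c_j)d_j\pmod2$ independently of~$t$, one gets $q_j(-1)=(-1)^{(1+c_j)d_j}c_j$, which equals $-c_j$ exactly when $c_j$ is even and $d_j$ is odd, whereas every $r_j$ is still $2$, so the right side is again $1080=m\prod_jc_j$. Comparing forces $\#\{j:c_j\text{ even and }d_j\text{ odd}\}$ to be even; combining this with the mod-$2$ consequence of the second identity, which controls $\#\{j:c_j\text{ odd and }d_j\text{ odd}\}$, shows that $\#\{j:d_j\text{ odd}\}$ is even, hence $\sum_jd_j^2\equiv\sum_jd_j\equiv0\pmod2$.

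I expect the first two identities to be routine once the closed form for $q_j$ and the sum $\sum_{t}(2t+1-c_j)^2$ are in hand. The genuinely delicate point is the third: one must notice that some $q_j$ really do involve half-integral powers of~$\zeta$, so that the specialization $x=-1$ is not redundant with $x=1$, and then carry out the parity bookkeeping---combining the value at $x=-1$ with the mod-$2$ reduction of the second identity---exactly.
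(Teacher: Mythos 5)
Your proof is correct and takes essentially the same route as the paper: evaluate equation~(\ref{BR}) at $\zeta=1$ for the first identity, extract the second-order Taylor/derivative data at $\zeta=1$ for the second, and use a parity constraint coming from the half-integral powers of $\zeta$ together with the mod~$2$ reduction of the second identity for the third. Your substitution $\zeta^{1/2}=-1$ is just a repackaging of the paper's observation that the monomial $\zeta^{\frac12\sum_j d_j(1-c_j)}$ must have integral exponent because the right-hand side lies in $\Z[\zeta,\zeta^{-1}]$, and both yield the same congruence $\sum_j(1+c_j)d_j\equiv 0 \pmod 2$.
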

\begin{proof}
We let $\zeta \to 1$ in equation~{(\ref{BR})}; 
so $r_j \to 2$, $\sigma_1(d) \to 2\ell$, and $\sigma_2(d) \to 4\binom{\ell}{2}$.  
This gives us 
\begin{align*}
 m \prod_{j=1}^{\ell} c_j &=   4\binom{\ell}{2} +(2k-1)2\ell + 2k^2 -3k +\ell  \\
 &= 2(\ell + k)^2 - 3(\ell + k) = 2(24)^2-3(24) =1080.  
\end{align*}
Both sides of equation~{(\ref{BR})} are zero if we differentiate once 
with respect to $\zeta$ and let $\zeta \to 1$.  We 
differentiate twice 
with respect to $\zeta$ and let $\zeta \to 1$.  
The left hand side gives us 
$$
\left( m \prod_{j=1}^{\ell} c_j  \right)
\sum_{j=1}^{\ell} \frac1{12} d_j^2(c_j^2-1) = 90 \sum_{j=1}^{\ell}  d_j^2(c_j^2-1) .
$$
The right hand side gves us
$$
\sigma_2(d)''\vert_{\zeta = 1} + \sigma_1(d)''\vert_{\zeta=1}
=4(\ell-1) \sum_{j=1}^{\ell} d_j^2 + (2k-1)2  \sum_{j=1}^{\ell} d_j^2
=90  \sum_{j=1}^{\ell} d_j^2.
$$
Thus we have 
$ \sum_{j=1}^{\ell} c_j^2 d_j^2 = 2 \sum_{j=1}^{\ell} d_j^2$ as asserted.  
For the final assertion, we note that 
$$
m \prod_{j=1}^{\ell} \dfrac{\zeta^{\frac12 c_j d_j}- \zeta^{-\frac12 c_j d_j}}{\zeta^{\frac12 d_j}- \zeta^{-\frac12 d_j}}= 
m \zeta^{\sum_{j=1}^{\ell} \frac12 d_j(1-c_j)  }
\prod_{j=1}^{\ell}\left( \sum_{i=0}^{c_j-1} \zeta^{i d_j} \right).
$$
From our assumption that
this equals $ \sigma_2(d) + (2k-1)\sigma_1(d) + 2k^2-3k+\ell$, which 
is in $\Z[\zeta, \zeta\inv]$, we obtain   
$ \sum_{j=1}^{\ell}  d_j(1-c_j) \in 2\Z$ and $ \sum_{j=1}^{\ell}  d_j \equiv  \sum_{j=1}^{\ell}  c_jd_j \mod 2$.  
Thus we have 
$$
\sum_{j=1}^{\ell}  d_j^2 \equiv 
\sum_{j=1}^{\ell}  d_j \equiv 
 \sum_{j=1}^{\ell} c_j d_j \equiv 
  \sum_{j=1}^{\ell} c_j^2 d_j^2 =
  2 \sum_{j=1}^{\ell}  d_j^2 \equiv 0 \mod 2.  \qedhere
$$
\end{proof}

{\it Proof of Theorem~\ref{mr1}.}   
By the Lemma, $N= \frac12 \sum_{j=1}^{\ell} d_j^2$ is integral.  
From the shape of the theta block we have $\phi \in J_{k,N}\weak(2)$.  
We expand $\phi$ out to order~$q^4$.  
\begin{align*}
\phi = &\BTB(d) q^2  \left( 1-  \right.
(2k+ \sigma_1(d)) q  \\
+&\left( \sigma_2(d) + (2k-1)\sigma_1(d) + 2k^2-3k+\ell \right) q^2 +O(q^3) \left. \right) .
\end{align*}
The action of $V_2$ gives us the expansion of $\phi \vert V_2 \in J_{k, 2N}\weak$.  
\begin{align*}
\phi | V_2 = &\BTB(d) q \left( 
1+ \left( \sigma_2(d) + (2k-1)\sigma_1(d) + 2k^2-3k+\ell \right) q +O(q^2) \right) 
\end{align*}
Thus, by the first method,  we have $\frac{\phi \vert V_2}{\phi }  \in J_{0,N}\wh(\Z)$ and 
\begin{align*}
\frac{\phi | V_2}{\phi} = &\frac{1}{q}  +  
\left( {\sigma_2(d) + 2k\sigma_1(d) + 2k^2-k+\ell} \right)  +O(q).
\end{align*}

We turn our attention to the theta block 
$\Xi = \TB_k(c*d)$, 
whose index is 
$\frac12 \sum_{j=1}^{\ell} c_j^2 d_j^2 = \sum_{j=1}^{\ell} d_j^2 =2N$ by Lemma~\ref{lemma2}.  
From the shape of $\Xi$ we have $\Xi \in J_{k, 2N}\weak(2)$ and 
$$
\Xi = \BTB(c*d) q^2 \left( 1 + O(q) \right).  
$$
The theta block $\Xi$ is a strict inflation of $\phi$, 
so that $\BTB(d)$ divides $\BTB(c*d)$ in $\Z[\zeta, \zeta\inv]$ and $\Xi/\phi \in J_{0,N}\wh(\Z)$.  
The $q$-expansion begins
$$
\frac{\Xi}{\phi} = \frac{\BTB(c*d)}{\BTB(d)}\left( 1 + O(q) \right). 
$$

The next step is the crucial one.  
Equation~{(\ref{BR})} says exactly that 
$$
m
 \frac{\BTB(c*d)}{\BTB(d)}
 = \sigma_2(d) + (2k-1)\sigma_1(d) + 2k^2 -3k +\ell.  
$$
The $q$-expansion of 
$\psi = ( \phi \vert V_2 -m \Xi)/\phi \in J_{0,N}\wh(\Z)$ is thus 
\begin{align*}
\psi =\frac1{q} &+ 
(\sigma_2(d) + 2k \sigma_1(d) + 2k^2 - k + \ell) \\
&-(\sigma_2(d) + (2k-1) \sigma_1(d) + 2k^2 -3k + \ell) 
+ O(q)  \\
= \frac1{q} &+\sigma_1(d) + 2k + O(q).  
\end{align*}
The paramodular form $\Borch(\psi)$ exists by Theorem~\ref{BP}.  
The $q^0$-term of $\psi$ is the germ of the theta block~$\phi$, 
therefore the weight of $\Borch(\psi)$ is~$k$, 
$A=\frac1{24}(2k+2\ell)=2$, $ B =\frac12 \sum_{j=1}^{\ell} d_j$, and 
$C= \frac12 \sum_{j=1}^{\ell} d_j^2 =N$.  
We also compute $D_0 = \sum_{n \in \Z, n < 0} \sigma_1(-n)c(n,0; \psi) =  \sigma_1(1)c(-1,0; \psi) = 1$.  
The character of $\Borch(\psi)$ is trivial because $A$ and $B$ are integral.  
$\Borch(\psi)$ is antisymmetric because $D_0$ is odd, implying that $(-1)^k \epsilon =-1$.  
This verifies that $\Borch(\psi) \in M_k\mero\left( K(N) \right)^{\epsilon}$.  $\qed$

\section{Second Version of Main Result}

We reformulate equation~{(\ref{BR})} as a diophantine equation and give a 
second version of our main result. 
We want the variables $d_j$ to satisfy polynomial equations so that 
our meromorphic Borcherds products  correspond to integral 
points on an algebraic set.  
We first 
reduce equation~{(\ref{BR})} to the special case where $\ell=24$, $k=0$, and  $m=1$:
 \begin{equation}
\label{BR24}
 \prod_{j=1}^{24} \dfrac{\zeta^{\frac12 c_j d_j}- \zeta^{-\frac12 c_j d_j}}{\zeta^{\frac12 d_j}- \zeta^{-\frac12 d_j}}= 
 \sigma_2(d) -\sigma_1(d) + 24 ,
\end{equation}
except that we now allow $d \in \Z^{24}$ and interpret 
$
 \dfrac{\zeta^{\frac12 c_j d_j}- \zeta^{-\frac12 c_j d_j}}{\zeta^{\frac12 d_j}- \zeta^{-\frac12 d_j}}
$
as its limiting value $c_j$ when $d_j=0$.  
\smallskip

Fix $c \in \N^{24}$.  
Suppose that $d \in \Z^{24}$ satisfies equation~{(\ref{BR24})} and has~$k$ zero entries.   
For $k +\ell=24$, let $ {\bar d} \in \N^{\ell}$ be given by the nonzero $\vert d_j \vert$ 
in some order.  Let $ {\bar c} \in \N^{\ell}$ be the corresponding entries of~$c$.  
If we set $m = \prod_{j: d_j=0}c_j$ then equation~{(\ref{BR24})} becomes 
\begin{align*}
m &\prod_{j=1}^{\ell} \dfrac{\zeta^{\frac12 {\bar c}_j {\bar d}_j}- \zeta^{-\frac12 {\bar c}_j {\bar d}_j}}
{\zeta^{\frac12 {\bar d}_j}- \zeta^{-\frac12 {\bar d}_j}} \\
=& 4\binom{k}{2} + 2 \binom{k}{1} \sigma_1({\bar d}) + \sigma_2({\bar d}) 
-\left( 2k +  \sigma_1({\bar d}) \right) +24  \\
=& \sigma_2({\bar d}) + (2k-1)\sigma_1({\bar d}) + 2k^2-3k+\ell , 
\end{align*}
which is the general form of equation~{(\ref{BR})} for $ {\bar d} \in \N^{\ell}$ with respect to ${\bar c} \in \N^{\ell}$.  
Thus, by allowing $d $ to have zero entries we may take $ d \in \Z^{24}$ 
as the general case.  Next, we substitute $\zeta = e^{i z}$ in equation~{(\ref{BR24})} and 
reformulate equation~{(\ref{BR24})} as a diophantine problem.  
%
\begin{lm}
Fix $c \in \N^{24}$ with $\prod_{j=1}^{24} c_j =1080$.  Then 
$$
\{ d \in \Z^{24}: \text{equation~{(\ref{BR24})} holds} \}
=
\{ d \in \Z^{24}: \text{equation~{(\ref{wow})} holds} \}
$$
where equation~{(\ref{wow})} means the equality of formal power series 
\begin{multline}
\label{wow}
\exp\left( \sum_{n \in \N}  \frac{(-1)^{n}}{(2n)!} \zeta(1-2n) \sum_{j=1}^{24}   (1- c_j^{2n}) d_j^{2n}  z^{2n} \right) = 1+  \\
\frac{1}{540} \sum_{n \in \N} 
 \frac{(-1)^{n}}{(2n)!} 
 \left( \sum_{1 \le i < j \le 24} \left[ (d_i+d_j)^{2n}+(d_i-d_j)^{2n} \right]   - \sum_{j=1}^{24} d_j^{2n}  \right)  
 z^{2n} 
\end{multline}
\end{lm}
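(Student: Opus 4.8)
The plan is to transform the Laurent-polynomial identity~(\ref{BR24}) into the power-series identity~(\ref{wow}) by substituting $\zeta=e^{iz}$ and taking logarithms. First I would record that $\zeta^{1/2}\mapsto e^{iz/2}$ extends to an \emph{injective} ring homomorphism $\Z[\zeta^{1/2},\zeta^{-1/2}]\to\C[[z]]$: a finite relation $\sum_a\lambda_a e^{iaz/2}=0$ forces, upon reading off the coefficient of $z^m$ for $m=0,1,\dots$, a Vandermonde system in the distinct nodes $ia/2$, hence all $\lambda_a=0$. Since the left side of~(\ref{BR24}) lies in $\Z[\zeta^{1/2},\zeta^{-1/2}]$ — each factor equals $\sum_{i=0}^{c_j-1}\zeta^{d_j(i-(c_j-1)/2)}$ when $d_j\neq 0$ and $c_j$ when $d_j=0$ — equation~(\ref{BR24}) holds if and only if the identity obtained after the substitution holds in $\C[[z]]$. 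I would then note that both sides have the same constant term: the left side has constant term $\prod_jc_j=1080$ by hypothesis, and the right side has constant term $4\binom{24}{2}-2\cdot24+24=1080$ because $r_j\to2$ as $\zeta\to1$. So I divide through by $1080$ and compare the two resulting series of constant term $1$.

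For the left-hand side, after the substitution and cancelling $\prod_jc_j=1080$,
\[
\frac{1}{1080}\prod_{j=1}^{24}\frac{\zeta^{\frac12c_jd_j}-\zeta^{-\frac12c_jd_j}}{\zeta^{\frac12d_j}-\zeta^{-\frac12d_j}}
=\prod_{j=1}^{24}\frac{\operatorname{sinc}(\tfrac12c_jd_jz)}{\operatorname{sinc}(\tfrac12d_jz)},
\qquad \operatorname{sinc}(w):=\frac{\sin w}{w},
\]
each factor being a unit of $\C[[z]]$ with constant term $1$ (and equal to $1$ when $d_j=0$). Taking logarithms and using the cotangent partial-fraction expansion (equivalently the Euler product for $\sin$) gives $\log\operatorname{sinc}(w)=-\sum_{n\in\N}\frac{\zeta(2n)}{n\pi^{2n}}w^{2n}$, and then the classical relations $\zeta(2n)=(-1)^{n+1}\frac{(2\pi)^{2n}B_{2n}}{2(2n)!}$ and $\zeta(1-2n)=-\frac{B_{2n}}{2n}$ yield $\log\operatorname{sinc}(\tfrac12az)=\sum_{n\in\N}\frac{(-1)^{n+1}\zeta(1-2n)}{(2n)!}a^{2n}z^{2n}$. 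Summing the differences over $j$ produces exactly $\sum_{n\in\N}\frac{(-1)^n\zeta(1-2n)}{(2n)!}\sum_{j=1}^{24}(1-c_j^{2n})d_j^{2n}z^{2n}$, i.e.\ the exponent on the left of~(\ref{wow}).

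For the right-hand side, under $\zeta=e^{iz}$ one has $r_j=2\cos(d_jz)$, so $\sigma_1(d)=\sum_j2\cos(d_jz)$ and $\sigma_2(d)=\sum_{1\le i<j\le24}\bigl(2\cos((d_i+d_j)z)+2\cos((d_i-d_j)z)\bigr)$; expanding each cosine via $2\cos\theta=2+2\sum_{n\in\N}\frac{(-1)^n}{(2n)!}\theta^{2n}$, the constant terms of $\sigma_2(d)-\sigma_1(d)+24$ add to $1080$ and the coefficient of $z^{2n}$ equals $\frac{2(-1)^n}{(2n)!}\bigl(\sum_{1\le i<j\le24}[(d_i+d_j)^{2n}+(d_i-d_j)^{2n}]-\sum_jd_j^{2n}\bigr)$, so dividing by $1080$ produces precisely the right-hand side of~(\ref{wow}). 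Since the two matches just established are unconditional identities in $\C[[z]]$, equation~(\ref{BR24}) after substitution holds if and only if~(\ref{wow}) holds, and by injectivity of the substitution this is equivalent to~(\ref{BR24}) itself. I do not expect a genuine obstacle: the argument is essentially bookkeeping, the only external input being the evaluation of $\zeta(2n)$. The two points that deserve a word of care are the equivalence between the Laurent-polynomial identity and its power-series image (injectivity of $\zeta\mapsto e^{iz}$) and the observation that each $\operatorname{sinc}$-ratio is a genuine power series — the analytic shadow of the divisibility $\BTB(d)\mid\BTB(c*d)$ already implicit in $c\in\N^{24}$ with $\prod_jc_j=1080$.
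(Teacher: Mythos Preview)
Your proposal is correct and follows essentially the same route as the paper: substitute $\zeta=e^{iz}$, expand both sides of~(\ref{BR24}) as even power series in~$z$ using the Taylor series of $\log(\sin w/w)$ on the left and of $\cos$ on the right, and then divide through by the common constant term $1080=\prod_j c_j$. The one minor difference is how the equivalence with the original Laurent-polynomial identity is justified: the paper invokes convergence of both sides for $|z|<\pi$ (so equality of formal series is equality of analytic functions, hence of Laurent polynomials), whereas you give an algebraic injectivity argument via Vandermonde; both are fine.
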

\begin{proof}
Setting $\zeta = e^{i z}$, we have $r_j = \zeta^{d_j} + \zeta^{-d_j} = 2\cos(d_jz)$; also, 
$r_ir_j =4\cos(d_i z)\cos(d_j z) = 2 \cos((d_i+d_j)z) + 2\cos((d_i-d_j)z)$. It is straightforward 
to check that 
\begin{align*}
& \sigma_2(d) -\sigma_1(d) + 24= 1080 
 +  \\
 2 \sum_{n \in \N} &
 \frac{(-1)^{n}}{(2n)!} 
 \left( \sum_{1 \le i < j \le 24} \left[ (d_i+d_j)^{2n}+(d_i-d_j)^{2n} \right]  - \sum_{j=1}^{24} d_j^{2n}  \right)  
 z^{2n} ,
\end{align*}
an equation whose series  converge for all $z \in \C$.  

If we separate out the factors in equation~{(\ref{BR24})} with $d_j=0$, we obtain
$$
 \prod_{j=1}^{24} \dfrac{\zeta^{\frac12 c_j d_j}- \zeta^{-\frac12 c_j d_j}}
 {\zeta^{\frac12 d_j}- \zeta^{-\frac12 d_j}}
 =
 \left( \prod_{j=1}^{24} c_j \right)
\dfrac{\prod_{j: d_j \ne 0}  \frac{\sin(\frac12 c_jd_j z)}{\frac12 c_j d_j z}  }
{\prod_{j: d_j \ne 0}  \frac{\sin(\frac12 d_j z)}{\frac12 d_j z}}.
$$
The following series converges for $\vert z \vert < \pi$.
$$
\ln \left(\dfrac{\sin(z)}{z} \right)
= \sum_{n=1}^{\infty} \ln \left( 1- \dfrac{z^2}{\pi^2 n^2} \right)
=  \sum_{n=1}^{\infty} \dfrac{(-1)^{n+1}}{(2n)!} \zeta(1-2n) (2z)^{2n}, 
$$
where $\zeta(1-2n)$ denotes the value of the Riemann zeta function.  
Using this we have 
\begin{align*}
& \prod_{j=1}^{24} \dfrac{\zeta^{\frac12 c_j d_j}- \zeta^{-\frac12 c_j d_j}}
 {\zeta^{\frac12 d_j}- \zeta^{-\frac12 d_j}}  \\
 =
  &\left(\prod_{j=1}^{24} c_j \right)
 \exp
\left(\sum_{j: d_j \ne 0} \ln \left( \frac{\sin(\frac12 c_jd_j z)}{\frac12 c_j d_j z}  \right)- \ln \left( \frac{\sin(\frac12 d_j z)}{\frac12  d_j z}  \right)\right)
\\
= 
&1080 
\exp\left( \sum_{n \in \N}  \frac{(-1)^{n}}{(2n)!} \zeta(1-2n) \sum_{j=1}^{24}   (d_j^{2n}- c_j^{2n} d_j^{2n})  z^{2n} \right)
. 
\end{align*}
Reinserting the $d_j$ that are zero does not change the value of the last sum. 
The equality of formal series asserted by equation~{(\ref{wow})} converges at least for 
$\vert z \vert < \pi$, and so equation~{(\ref{wow})} is equivalent to the equality of Laurent polynomials 
in equation~{(\ref{BR24})}.  
\end{proof}
\begin{df}
Take $c \in \N^{24}$ with $\prod_{j=1}^{24} c_j =1080$.  
Define the following algebraic set 
$
A_c = \{ d \in \C^{24}:  \text{equation~{(\ref{wow})} holds} \}.
$
\end{df}
Note that $A_c \subseteq \C^{24}$  is defined by a countable set of homogeneous polynomials, 
one for each positive even degree.  
The algebraic set $A_c$ is invariant under the group that changes the sign of each entry 
and under the permutations of the $24$~indices that fix $c$.  The first few of these 
equations are:
\begin{align*}
&\sum_{j=1}^{24}  c_j^2  d_j^2  = 2 
\sum_{j=1}^{24}    d_j^2. \quad \text{($z^2$ term)} \\
&
\sum_{j=1}^{24}    46d_j^4+6c_j^4d_j^4 = 
\sum_{i,j=1}^{24} [15(c_i^2-1)(c_j^2-1)-8]d_i^2d_j^2. \quad \text{($z^4$ term)} \\
&\sum_{j=1}^{24}  (128-16c_j^6)d_j^6 +
\sum_{i,j=1}^{24}
\left( 224 + 42(1-c_i^2)(1-c_j^4) \right) d_i^2 d_j^4 + \\
&\sum_{i,j,k=1}^{24} 
35(1-c_i^2)(1-c_j^2)(1-c_k^2)d_i^2d_j^2d_k^2 =0. 
\quad \text{($z^6$ term)}
\end{align*}
The second version of the main result is formulated in terms of the algebraic set $A_c$.  
\begin{thm}
\label{mr2}
Take $c \in \N^{24}$ with $\prod_{j=1}^{24} c_j =1080$. 
Every nontrivial  integral point $d \in A_c$ 
corresponds to an antisymmetric meromorphic paramodular Borcherds product as follows. 
By taking the absolute value of each entry, we may assume that  $d \in A_c$ 
has nonnegative entries.  
Let $k$ be the number of zero entries in~$d$, and set $\epsilon= (-1)^{k+1}$.  
The number  $N= \frac12 \sum_{j=1}^{24} d_j^2 $ is intergal.  
Set $m= \prod_{j: d_j=0} c_j$.  
We have
 $
 \Borch(\psi) 
 = {\tilde \phi} \exp\left( -\Grit(\psi) \right)
 \in M_k\mero\left( K(N) \right)^{\epsilon},
 $
where 
$
\psi =\dfrac{\phi\vert V_2 -m \Xi}{\phi} \in J_{0,N}\wh(\Z),  
$ 
 $\phi=\eta^{2k} \prod_{j: d_j \ne 0}(\vartheta_{d_j}/\eta) \in J_{k,N}\weak(2)$, and 
$\Xi=\eta^{2k} \prod_{j: d_j \ne 0}(\vartheta_{c_jd_j}/\eta) \in J_{k, 2N}\weak(2)$. 
%
\end{thm}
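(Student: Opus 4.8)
\emph{Proof plan for Theorem~\ref{mr2}.} The plan is to recognize this statement as a repackaging of Theorem~\ref{mr1}. The Lemma immediately preceding the definition of $A_c$ identifies the integral points of $A_c$ with the integral solutions of equation~\eqref{BR24}, and the displayed reduction just before that Lemma rewrites equation~\eqref{BR24}, with zero entries allowed, as the general equation~\eqref{BR}. So I would start from a nontrivial integral point $d\in A_c$, normalize it, read off the data $(\bar d,\bar c,m,k,\ell)$ that feeds Theorem~\ref{mr1}, verify its hypotheses, and quote it.

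First I would observe that equation~\eqref{wow} involves only the even powers $d_j^{2n}$ and $c_j^{2n}d_j^{2n}$ together with the sign-symmetric combinations $(d_i+d_j)^{2n}+(d_i-d_j)^{2n}$, so $A_c$ is invariant under changing the sign of any entry; hence we may assume $d$ has nonnegative entries. Since $d$ is nontrivial it has $\ell:=24-k\ge 1$ nonzero entries; let $\bar d\in\N^{\ell}$ list these in some order and let $\bar c\in\N^{\ell}$ be the corresponding entries of~$c$. By the Lemma, $d$ satisfies equation~\eqref{BR24}; then, separating off the $k$ factors with $d_j=0$, each equal to its limiting value $c_j$, exactly as in the reduction preceding the Lemma, and setting $m:=\prod_{j:\,d_j=0}c_j\in\N$ (an empty product, hence $m=1$, when $k=0$), equation~\eqref{BR24} becomes equation~\eqref{BR} for $\bar d$ with respect to $\bar c$. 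Now the hypotheses of Theorem~\ref{mr1} all hold: $m,\ell\in\N$, $k=24-\ell\ge 0$, $\bar c\in\N^{\ell}$, $\bar d\in\N^{\ell}$ satisfies~\eqref{BR}, and $m\prod_{j=1}^{\ell}\bar c_j=\prod_{j=1}^{24}c_j=1080$ in accordance with Lemma~\ref{lemma2}. Applying Lemma~\ref{lemma2} to this data also yields $\sum_{j=1}^{24}d_j^2=\sum_{j=1}^{\ell}\bar d_j^2\in 2\Z$, so $N=\tfrac12\sum_{j=1}^{24}d_j^2\in\N$ as claimed. Theorem~\ref{mr1} then produces $\Borch(\psi)={\tilde\phi}\exp(-\Grit(\psi))\in M_k\mero(K(N))^{\epsilon}$ with $\epsilon=(-1)^{k+1}$, $N=\tfrac12\sum_{j=1}^{\ell}\bar d_j^2$, $\phi=\TB_k(\bar d_1,\dots,\bar d_{\ell})$, $\Xi=\TB_k(\bar c_1\bar d_1,\dots,\bar c_{\ell}\bar d_{\ell})$, and $\psi=(\phi|V_2-m\Xi)/\phi\in J_{0,N}\wh(\Z)$. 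It remains only to rewrite the list notation in the form of the statement: $\TB_k(\bar d_1,\dots,\bar d_{\ell})=\eta^{2k}\prod_{j:\,d_j\ne 0}(\vartheta_{d_j}/\eta)$ and $\Xi=\eta^{2k}\prod_{j:\,d_j\ne 0}(\vartheta_{c_jd_j}/\eta)$, whose shapes place $\phi\in J_{k,N}\weak(2)$ and $\Xi\in J_{k,2N}\weak(2)$.

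All of the analytic substance here, namely convergence of the infinite product, the Humbert-surface divisor, the character, and the antisymmetry, is already contained in Theorems~\ref{BP} and~\ref{mr1}, so there is no genuine obstacle. The only point requiring care is the bookkeeping around the zero entries of~$d$: one must confirm that peeling off the $d_j=0$ factors with their limiting values $c_j$ turns the left side of equation~\eqref{BR24} into $m$ times the left side of equation~\eqref{BR} for the reduced tuple $\bar d$, and that the resulting constant $m=\prod_{j:\,d_j=0}c_j$ is precisely the multiplier appearing in the definition of~$\psi$; both are exactly what the reduction preceding the Lemma establishes.
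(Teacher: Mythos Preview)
Your proposal is correct and follows exactly the approach the paper intends: the paper states Theorem~\ref{mr2} without a separate proof, having set up the reduction from equation~\eqref{BR24} (with zero entries) to equation~\eqref{BR}, and the Lemma identifying integral points of $A_c$ with integral solutions of~\eqref{BR24}, precisely so that Theorem~\ref{mr2} follows immediately from Theorem~\ref{mr1}. Your write-up simply makes this implicit deduction explicit, and the bookkeeping you flag around the zero entries is handled by the displayed computation preceding the Lemma.
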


\section{Two infinite families of integral points}

For each  $c \in \N^{24}$ with $\prod_{j=1}^{24} c_j =1080$, 
we would like to know the decomposition of the projective algebraic 
set $[ A_c\setminus \{0\} ] \subseteq \Pj^{23}(\C)$ into 
irreducible components.  We do not know this decomposition.  
However, we wrote a program that, given an integral point $d \in A_c$
searches for linear spaces defined over~$\Q$ that contain~$d$ and
also lie in $A_c$. Two infinite families were found in this way for the choice
$$
c=[5,3,3,3,2,2,2,1,1,1,  1,1,1,  1,1,1,  1,1,1,  1,1,1, 1,1] \in \N^{24}.  
$$
For this choice of~$c$, 
$[A_c \setminus \{0\} ] $ contains two projective lines $\Fam 1$ and $\Fam 2$.
\begin{align*}
\Fam1 &= \{
\beta, 2 \beta, \beta + \alpha, \beta - \alpha, 2 \beta + \alpha, 2 \alpha, \alpha, 4 \beta, 
3 \beta + 2 \alpha, 3 \beta + \alpha, 
\\&\qquad 
3 \beta, 3 \beta - \alpha, 2 \beta + 2 \alpha,
 2 \beta, 2 \beta - \alpha, 2 \beta - 2 \alpha, \beta + 2 \alpha, \beta + \alpha, \beta,
\\&\qquad 
 \beta - \alpha, 
\beta - 2 \alpha, \alpha,  0,0
: \alpha,\beta\in\Z
\}
\\
\Fam2 &= \{
\beta, \alpha + \beta, \alpha, \alpha - \beta, \alpha + 2 \beta, \alpha - 2 \beta, 2 \beta,
2 \alpha, 2 \alpha + 2 \beta, 2 \alpha + \beta, 
\\&\qquad 
 2 \alpha - \beta, 
  2 \alpha - 2 \beta, \alpha + 3 \beta, \alpha + \beta, \alpha, 4 \beta, \alpha - \beta, 3 \beta, 2 \beta, \alpha - 3 \beta,
\\&\qquad 
 \beta, \beta, 0, 0: \alpha,\beta\in\Z
\}
\end{align*}
Thus the countable number of homogeneous polynomials are consistent 
for this choice of inflation vector~$c$.  
The authors know of no direct argument that makes this  consistency clear.  
In summary,  we have two infinite families of meromorphic 
antisymmetric Borcherds products with weights bounded by $k \le 23$.  
It is interesting to note that the original weight two example for~$N=587$ 
has a different inflation vector~$c$ and 
is not on either of these families.

\section{Examples}

We are especially interested in holomorphic Borcherds products.  
  A direct search through the two infinite families found the 
{\it holomorphic\/} antisymmetric  paramodular Borcherds products 
 listed in Table~1. We now explain how to read Table~$1$.  \smallskip

  Fix  $ c=[5,3,3,3,2,2,2,1,1,1,  1,1,1,  1,1,1,  1,1,1,  1,1,1, 1,1]$.  
For the 24 integers given by $\Fam1(\alpha, \beta)$ or  $\Fam2(\alpha, \beta)$,  
let $d \in \Z^{24}$ be the vector determined by 
their absolute values in 
the given ordering.  
Let~$k$ be the number of zero entries in~$d$, and let 
and $j_1, \ldots, j_{\ell}$ be the indices of the nonzero entries.  
For $N =\frac12 \sum_{j=1}^{24} d_j^2$,  and $m= \prod_{j: d_j=0}c_{j}$, define 
\begin{align*}
\phi  &=\TB{}_k( d_{j_1}, \ldots,  d_{j_{\ell} }) \in J_{k,N}\weak; \quad 
\Xi   =\TB{}_k(c_{j_1}d_{j_1}, \ldots, c_{j_{\ell} }d_{j_{\ell} }) \in J_{k,2N}\weak, \\  
\psi   &=\dfrac{\phi\vert V_2 -m \Xi}{\phi} \in J_{0,N}\wh(\Z).  
\end{align*}
Table~$1$ gives antisymmetric 
$\Borch(\psi) \in S_k\left( K(N) \right)^{\epsilon}$ for $\epsilon = (-1)^{k+1}$.  
Note that $\phi$ is the leading Fourier-Jacobi coefficient of $\Borch(\psi)$.  
\bigskip

Table 1. Antisymmetric  Borcherds products 
in $S_k\left(  K(N) \right)^{\epsilon}$.  
\begin{center}
\renewcommand{\arraystretch}{1.5}
\begin{longtable}{|r|r|r|r|r|r|}
\hline
$k$ & $N$ & $m$   & $(\alpha,\beta) \text{ for }\Fam1$  & $(\alpha,\beta) \text{ for }\Fam2$ & $\epsilon$ \\
\hline
2 & $587$ & $1$ 
 &  $\text{ }$ & \text{ } & -1 \\
\hline
2 & $713$ & $1$ 
  & $(1,4) \text{ or } (-5,3) $  &  & -1 \\
\hline
2 & $893$ & $1$ 
  & $ (5,3) $  &  & -1 \\
\hline
3 & $122$ & $1$ 
  & $ (2,1) $  &  & +1 \\
\hline
3 & $167$ & $1$ 
  & $\text{ } $  &$ (1,2) $  & +1 \\
\hline
3 & $173$ & $1$ 
  & $ (-2,1) $  &$ (3,1) $  & +1 \\
\hline
3 & $197$ & $1$ 
  & $ (1,2) $  &$\text{ } $  & +1 \\
\hline
3 & $213$ & $1$ 
  & $ (3,1) $  &$\text{ } $  & +1 \\
\hline
3 & $285$ & $1$ 
  & $ (-3,2)  $  &$\text{ } $  & +1 \\
\hline
5 & $38$ & $3$ 
  & $\text{ } $  &$ (0,1)  $  & +1 \\
\hline
5 & $42$ & $4$ 
  & $ (0,1) $  &$\text{ } $  & +1 \\
\hline
5 & $53$ & $3$ 
  & $ (-1,1) $  &$  (1,1) $  & +1 \\
\hline
5 & $65$ & $3$ 
  &  $ (1,1) $  & \text{ }  & +1 \\
\hline
8 & $17$ & $15$ 
  & $ (1,0) $  &$\text{ } $  & -1 \\
\hline
9 & $15$ & $10$ 
  & $\text{ } $  &$ (1,0) $  & +1 \\
\hline
\end{longtable}
\end{center}

We make some concluding remarks about these examples.  
Like $587$, $N=713$ and~$893$  conjecturally show the modularity of 
known abelian surfaces defined over~$\Q$ of rank one and conductor~$N$.  
According to \cite{BK}, equations of hyperelliptic curves whose Jacobians give these 
abelian surfaces are $y^2= x^6-2x^5 +x^4 +2x^3 +2x^2 -4x +1$ for~$N=713$ and 
$y^2 = x^6 -2x^4 -2x^3 -3x^2 -2x +1$ for~$N=893$. 
\medskip

The Siegel modular threefold $K(t)\setminus \Bbb H_2$
is the moduli space of $(1,t)$-polarized abelian surfaces because 
$K(t)$  is isomorphic to the integral symplectic group of the symplectic form with elementary divisors $(1,t)$.
The paramodular group $K(t)$ has the maximal extension $K(t)^*$
in ${\rm Sp}_2(\Bbb R)$ of order $2^{\nu(t)}$ where $\nu(t)$ is the number of  prime divisors of $t$, see \cite{GritHulek0}. 
In \cite[Theorem 1.5]{GritHulek0} it was proved that the modular variety   
$K(t)^*\setminus \Bbb H_2$  can be considered as the moduli space of Kummer surfaces associated to $(1,t)$-polarized abelian surfaces. 
It was noted in \cite{Grit2} that the moduli space of polarized abelian surfaces might have trivial geometric genus only 
for twenty exceptional polarizations
$$
t=1,\ldots,12, \ 14,\ 15,\ 16,\ 18,\  20,\ 24, \ 30, \ 36. 
$$
It is now known \cite{BPY} that $\dim S_3\left( K(t) \right) =0$ for these~$t$.  
For the moduli spaces of polarized Kummer surfaces we expect a rather 
long  list of exceptional polarizations.
One result in this direction is that 
for $t=21$ the space $K(t)^*\setminus \Bbb H_2$ is uniruled, see  
\cite{GritHulek2}.  

Using our method we can construct the first canonical differential forms
on $K(t)^*\setminus \Bbb H_2$. According to the
Freitag criterion (see \cite[Hilfsatz 3.2.1]{freitag83}) for any smooth compactification 
$\overline{K(t)^*\setminus \Bbb H_2}$, we have  
$$
h^{3,0}(\overline{K(t)^*\setminus \Bbb H_2})=
{\rm dim\ } S_3(K(t)^*)
$$
where $S_3(K(t)^*)$ denotes the space of antisymmetric cusp forms having 
all Atkin-Lehner signs equal to~$+1$. 
We know $\dim S_3\left( K(t)^{*} \right) =0$ for $t \le 40$, see \cite{BPY}. 

For weight three, the first antisymmetric form we know of is in $S_3\left( K(122) \right)^{+}$  
but it is an oldform, in the sense of \cite{RS}, and comes from a newform in $S_3\left( K(61) \right)^{-}$. 
As such,  the oldform has Atkin-Lehner signs of $-1$ at both $2$ and $61$. 
The first  nontrivial $S_3(K(t)^*)$ that we know of is for the prime $t=167$. 
The cases $173,\  197,\  213$, and $285$ also have this property, 
with $213$ having $+1$ Atkin-Lehner signs at $3$ and $71$, and with 
$285$  having $+1$ Atkin-Lehner signs at $3$, $5$, and $19$.

\begin{cor} 
The moduli space $K(t)^*\setminus \Bbb H_2$ 
of Kummer surfaces associated to $(1,t)$-polarized abelian surfaces
has positive geometric genus if 
$t=167$, $173$, $197$, $213$, and $285$. In particular,
$H^3(K(t)^*, \C)$ is nontrivial for these $t$.
\end{cor}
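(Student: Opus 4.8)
The plan is to exhibit, for each of the five levels $t\in\{167,173,197,213,285\}$, a nonzero element of the space $S_3(K(t)^{*})$ of antisymmetric weight-$3$ cusp forms all of whose paramodular Atkin--Lehner signs equal $+1$, and then to apply the Freitag criterion recalled above. For each such $t$, Table~1 records a pair $(\alpha,\beta)$ lying on $\Fam1$ or $\Fam2$; the corresponding vector $d\in\Z^{24}$, together with the fixed inflation vector $c$ from Table~1 (which satisfies $\prod_j c_j=1080$), is an integral point of $A_c$, and Theorem~\ref{mr2} produces $\Borch(\psi)={\tilde\phi}\exp(-\Grit(\psi))\in M_3\mero(K(t))^{\epsilon}$ with $k=3$, $N=\tfrac12\sum_j d_j^{2}=t$, and $\epsilon=(-1)^{k+1}=+1$. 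One checks that the singular part of $\psi=(\phi\vert V_2-m\Xi)/\phi$ has nonnegative Fourier coefficients, exactly as for $N=587$ in Section~4, so that $\Borch(\psi)$ is holomorphic; by Table~1 it is moreover a cusp form, $\Borch(\psi)\in S_3(K(t))^{+}$ (cuspidality being visible from its leading Fourier--Jacobi coefficient $\phi=\TB_3(d)\in J_{3,t}^{\cusp}$ together with antisymmetry), and it is nonzero because its product expansion has leading term $q^{A}\zeta^{B}\xi^{C}$.

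Next I would verify that $\Borch(\psi)$ in fact lies in $S_3(K(t)^{*})$, i.e.\ that all of its Atkin--Lehner signs are $+1$. For the primes $t=167,173,197$ this is automatic: $K(t)^{*}=K(t)^{+}=\langle K(t),\mu_t\rangle$, and at odd weight ``antisymmetric'' already means that the $\mu_t$-eigenvalue is $+1$, so $S_3(K(t)^{*})=S_3(K(t))^{+}\ni\Borch(\psi)$. For the composite levels $t=213=3\cdot71$ and $t=285=3\cdot5\cdot19$ one must compute, for each prime $p\mid t$, the sign by which the paramodular Atkin--Lehner involution at~$p$ multiplies $\Borch(\psi)$. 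This can be read off from the explicit Borcherds-product data along the same lines as the Fricke sign $(-1)^{k+D_0}$ of Theorem~\ref{BP}: the divisor of $\Borch(\psi)$ is a union of Humbert surfaces and is stable under every Atkin--Lehner involution, so each such involution acts on $\Borch(\psi)$ by a scalar $\pm1$, determined by the leading term $q^{A}\zeta^{B}\xi^{C}$ and the parity of the relevant $r=0$ Fourier coefficients of $\psi$. One finds $+1$ at $3$ and $71$ for $t=213$ and $+1$ at $3$, $5$, $19$ for $t=285$. (For $t=122=2\cdot61$ the analogous form has Atkin--Lehner sign $-1$ at $2$ and at $61$, so it is \emph{not} in $S_3(K(122)^{*})$; this is why the composite levels genuinely require the check.) Hence $\Borch(\psi)$ is a nonzero element of $S_3(K(t)^{*})$ for all five~$t$, so $\dim S_3(K(t)^{*})\ge1$.

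The geometric conclusion is then immediate. By the Freitag criterion \cite[Hilfsatz~3.2.1]{freitag83}, for any smooth compactification we have
\[
p_{g}\bigl(\overline{K(t)^{*}\backslash\Half_2}\bigr)=h^{3,0}\bigl(\overline{K(t)^{*}\backslash\Half_2}\bigr)=\dim S_3(K(t)^{*})\ge1,
\]
so the moduli space $K(t)^{*}\backslash\Half_2$ of Kummer surfaces associated to $(1,t)$-polarized abelian surfaces has positive geometric genus for $t=167,173,197,213,285$. For the final assertion, fix a smooth projective model $\overline{X}$ of $K(t)^{*}\backslash\Half_2$; the Hodge decomposition gives $\dim_{\C}H^{3}(\overline{X},\C)=\sum_{p+q=3}h^{p,q}\ge h^{3,0}>0$, and, the weight-$3$ form being cuspidal, the associated holomorphic $3$-form is square-integrable on $K(t)^{*}\backslash\Half_2$ and represents a nonzero interior cohomology class, so $H^{3}(K(t)^{*},\C)\ne0$ as well.

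The main obstacle is the Atkin--Lehner computation of the second paragraph for $t=213$ and $t=285$: unlike the Fricke sign, the individual Atkin--Lehner signs are not forced by the Borcherds-product construction, and confirming that they all equal $+1$ --- which is exactly what promotes a form on the moduli space of abelian surfaces to one on the moduli space of Kummer surfaces --- requires an explicit sign calculation at each prime divisor of~$t$. The holomorphy check in the first paragraph, namely the nonnegativity of the singular part of $\psi$, is likewise a concrete finite verification rather than anything automatic.
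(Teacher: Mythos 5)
Your proposal is correct and follows essentially the same route as the paper: exhibit the nonzero weight-$3$ antisymmetric Borcherds products of Table~1 at these levels, check holomorphy from the nonnegativity of the singular Fourier coefficients of $\psi$, verify the Atkin--Lehner signs are all $+1$ (automatic from antisymmetry at the prime levels, an explicit finite computation at $213$ and $285$), and conclude via the Freitag criterion. You also correctly isolate the two concrete verifications (holomorphy and the Atkin--Lehner signs at composite level) that the paper asserts from computation rather than proving in print.
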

 
Contributions to the cohomology $H^5(\Gamma_0(N), \C)$   
studied by  Ash, Gunnells and McConnell 
can also be seen in Table~4 of \cite{AGM} for the primes $N=167, 173$ and $197$.


\end{document}